\newtheorem{theorem}{Theorem}[section]
\newtheorem{lemma}[theorem]{Lemma}
\newtheorem{corollary}[theorem]{Corollary}
\theoremstyle{definition}
\newtheorem{definition}[theorem]{Definition}
\newtheorem{example}[theorem]{Example}
\theoremstyle{remark}
\newtheorem{remark}[theorem]{Remark}
\numberwithin{equation}{section}
  \DeclareMathOperator{\spe}{sp}
\begin{document}
\setcounter{page}{1}

\title[Pompeiu--\v{C}eby\v{s}ev type inequalities in inner product spaces]
{Pompeiu--\v{C}eby\v{s}ev type inequalities for selfadjoint
operators in Hilbert spaces}

\author[M. W.  Alomari]{Mohammad W. Alomari}

\address{Department of Mathematics, Faculty of Science and
Information Technology, Irbid National University, P.O. Box 2600,
Irbid, P.C. 21110, Jordan.}
\email{\textcolor[rgb]{0.00,0.00,0.84}{mwomath@gmail.com}}


\subjclass[2010]{Primary 47A63; Secondary  47A99.}

\keywords{Hilbert space, Selfadjoint operators,
$h$-Synchronization.}


\begin{abstract}
In this work, generalizations of some inequalities for continuous
$h$-synchronous ($h$-asynchronous) functions of selfadjoint linear
operators in Hilbert spaces are proved.
\end{abstract}

\maketitle

\section{Introduction}

Let $\mathcal{B}\left( H\right) $ be the Banach algebra of all
bounded linear operators defined on a complex Hilbert space
$\left( H;\left\langle \cdot ,\cdot \right\rangle \right)$  with
the identity operator  $1_H$ in $\mathcal{B}\left( H\right) $. Let
$A\in \mathcal{B}\left( H\right) $ be a selfadjoint linear
operator on $\left( H;\left\langle \cdot ,\cdot \right\rangle
\right)$. Let $C\left(\spe\left(A\right)\right)$ be the set of all
continuous functions defined on the spectrum of $A$
$\left(\spe\left(A\right)\right)$ and let $C^*\left(A\right)$ be
the $C^*$-algebra generated by $A$ and the identity operator
$1_H$.

Let us define the map $\mathcal{G}:
C\left(\spe\left(A\right)\right) \to C^*\left(A\right)$ with the
following properties (\cite{TF}, p.3):
\begin{enumerate}
\item $\mathcal{G}\left(\alpha f + \beta g\right) = \alpha
\mathcal{G}\left(f\right)+\beta \mathcal{G}\left(g\right)$, for
all scalars $\alpha, \beta$.

\item $\mathcal{G}\left(fg\right) = \mathcal{G}\left(f\right)
\mathcal{G}\left(g\right)$ and
$\mathcal{G}\left(\overline{f}\right)=\mathcal{G}\left(f\right)^*$;
where $\overline{f}$ denotes to the conjugate of $f$ and
$\mathcal{G}\left(f\right)^*$ denotes to the Hermitian of
$\mathcal{G}\left(f\right)$.

\item $\left\|\mathcal{G}\left(f\right)\right\|=\left\|f \right\|
= \mathop {\sup }\limits_{t \in \spe\left(A\right)} \left|
{f\left( t \right)} \right| $.

\item $\mathcal{G}\left( {f_0 } \right) = 1_H$ and
$\mathcal{G}\left( {f_1 } \right) = A$, where
$f_0\left(t\right)=1$ and $f_1\left(t\right)=t$ for all $t \in
\spe\left(A\right)$.
\end{enumerate}
Accordingly,  we define the continuous functional calculus for a
selfadjoint operator $A$ by
\begin{align*}
f\left(A\right) = \mathcal{G}\left(f\right)  \text{for all} \,f\in
C\left(\spe\left(A\right)\right).
\end{align*}
If both $f$ and $g$ are real valued functions on $\spe(A)$ then
the following important property holds:
\begin{align}
f\left( t \right) \ge g\left( t \right)  \,\text{for all} \, \,t
\in \spe\left( A \right) \,\,\text{implies}\,\, f\left( A \right)
\ge g\left( A \right), \label{eq1.2}
\end{align}
in the operator order of $\mathcal{B}(H)$.

In \cite{SD1}, Dragomir studied the \v{C}eby\v{s}ev functional
\begin{align}
\label{cebysev} C\left(f,g;A,x\right):= \left\langle {f\left( A
\right)g\left( A \right)x,x} \right\rangle - \left\langle {
g\left( A \right)x,x} \right\rangle \left\langle { f\left( A
\right)x,x} \right\rangle,
\end{align}
for any selfadjoint operator $A\in \mathcal{B}(H)$ and $x\in H$
with $\|x\|=1$.

To study the positivity of \eqref{cebysev}, Dragomir \cite{SD1}
introduced the following two results concerning continuous
synchronous (asynchronous) functions of selfadjoint linear
operators in Hilbert spaces.
\begin{theorem}
\label{thm1.1}Let $A$ be a selfadjoint operator with
$\spe\left(A\right)\subset \left[\gamma,\Gamma\right]$  for some
real numbers $\gamma,\Gamma$ with $\gamma<\Gamma$. If $f,g: \left[
{\gamma,\Gamma} \right]\to \mathbb{R}$ are continuous and
synchronous (asynchronous) on $\left[ {\gamma,\Gamma} \right]$,
then
\begin{align}
\label{eq1.3} \left\langle {f\left( A \right)g\left( A \right)x,x}
\right\rangle \ge (\le) \left\langle { g\left( A \right)x,x}
\right\rangle \left\langle { f\left( A \right)x,x} \right\rangle
\end{align}
for any $x\in H$ with $\|x\|=1$.
\end{theorem}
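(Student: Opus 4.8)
The plan is to derive the operator inequality \eqref{eq1.3} from a purely pointwise inequality by invoking the order-preserving property \eqref{eq1.2}, but applied \emph{twice}. Recall that $f,g$ synchronous on $[\gamma,\Gamma]$ means $(f(t)-f(s))(g(t)-g(s)) \geq 0$ for all $t,s\in[\gamma,\Gamma]$, with the reverse inequality in the asynchronous case; this is exactly the two-variable expression whose integral against a product measure reproduces the \v{C}eby\v{s}ev functional, so I expect a discrete/operator analogue of the Korkine identity to be the mechanism.

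First I would fix $s\in\spe(A)$ and regard $t\mapsto (f(t)-f(s))(g(t)-g(s))$ as a continuous real-valued function on $\spe(A)$ that is nonnegative by synchronicity. Expanding the product and passing through the functional calculus, property \eqref{eq1.2} yields
\[
f(A)g(A) - g(s)\,f(A) - f(s)\,g(A) + f(s)g(s)\,1_H \;\geq\; 0 .
\]
Taking the inner product with a unit vector $x$ then gives, for every $s\in\spe(A)$,
\[
H(s) := \langle f(A)g(A)x,x\rangle - g(s)\langle f(A)x,x\rangle - f(s)\langle g(A)x,x\rangle + f(s)g(s) \;\geq\; 0 .
\]

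The decisive second step is to observe that $s\mapsto H(s)$ is itself continuous and nonnegative on $\spe(A)$, so \eqref{eq1.2} applies once more, now in the variable $s$, giving $H(A)\geq 0$ and hence $\langle H(A)x,x\rangle\geq 0$. Under this second functional calculus the scalar $\langle f(A)g(A)x,x\rangle$ becomes that number times $1_H$, the term $-g(s)\langle f(A)x,x\rangle$ becomes $-\langle f(A)x,x\rangle\,g(A)$, the term $-f(s)\langle g(A)x,x\rangle$ becomes $-\langle g(A)x,x\rangle\,f(A)$, and $f(s)g(s)$ becomes $f(A)g(A)$. Computing $\langle H(A)x,x\rangle$ and using $\langle 1_H x,x\rangle=\|x\|^2=1$, the two mixed terms coincide and one obtains
\[
\langle H(A)x,x\rangle = 2\langle f(A)g(A)x,x\rangle - 2\langle f(A)x,x\rangle\langle g(A)x,x\rangle \;\geq\; 0 ,
\]
which is precisely \eqref{eq1.3}. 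The asynchronous case runs identically with every inequality reversed.

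The step I expect to be the main obstacle is the careful bookkeeping in this second application of the calculus: one must track which occurrences of $f(s),g(s)$ are promoted to operators $f(A),g(A)$ and which factors (the already-evaluated inner products) stay inert as scalar multipliers, so that the two cross terms genuinely collapse into a single $2\langle f(A)x,x\rangle\langle g(A)x,x\rangle$. A secondary point to verify is the legitimacy of feeding $H$ into $\mathcal{G}$ at all, namely that $H$ is continuous on $\spe(A)\subseteq[\gamma,\Gamma]$ and nonnegative there; both are immediate once the first display is established for every fixed $s$, since $f,g$ are continuous.
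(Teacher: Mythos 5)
Your proof is correct and follows essentially the same strategy the paper uses (for its generalization, Theorem \ref{thm2.1}, which reduces to this statement when $h\equiv 1$): fix $s$, apply the order-preserving property \eqref{eq1.2} in the variable $t$ and take the inner product with $x$, then apply \eqref{eq1.2} a second time in $s$ and take the inner product again, so that the two cross terms collapse and the factor $2$ cancels. The only cosmetic difference is that the paper keeps a second unit vector $y$ in the final step and sets $y=x$ at the end, whereas you specialize to $x$ immediately; the bookkeeping you flag as the main obstacle is handled correctly.
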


\begin{theorem}
\label{thm1.2} Let $A$ be a selfadjoint operator with
$\spe\left(A\right)\subset \left[\gamma,\Gamma\right]$  for some
real numbers $\gamma,\Gamma$ with $\gamma<\Gamma$.
\begin{enumerate}
\item If $f,g: \left[ {\gamma,\Gamma} \right]\to \mathbb{R}$ are
continuous and synchronous on $\left[ {\gamma,\Gamma} \right]$,
then
\begin{multline}
 \left\langle {f\left( A \right)g\left( A \right)x,x} \right\rangle  - \left\langle { f\left( A \right)x,x} \right\rangle  \cdot \left\langle { g\left( A \right)x,x} \right\rangle  \\
  \ge \left[ { \left\langle {f\left( A \right)x,x} \right\rangle  - f\left( {\left\langle {Ax,x} \right\rangle } \right) } \right] \\
\times \left[ {g\left( {\left\langle {Ax,x} \right\rangle }
\right)   -
 \left\langle
{g\left( A \right)x,x} \right\rangle } \right]  \label{eq1.4}
\end{multline}
 for any $x\in H$ with $\|x\|=1$.

 \item If $f,g: \left[ {\gamma,\Gamma} \right]\to \mathbb{R}$
are continuous and asynchronous on $\left[ {\gamma,\Gamma}
\right]$, then
\begin{multline}
\left\langle { f\left( A \right)x,x} \right\rangle \cdot
\left\langle { g\left( A \right)x,x} \right\rangle
-  \left\langle {f\left( A \right)g\left( A \right)x,x} \right\rangle  \\
  \ge \left[ { \left\langle {f\left( A \right)x,x} \right\rangle  - f\left( {\left\langle {Ax,x} \right\rangle } \right)
   } \right] \\
\times \left[ { \left\langle {g\left( A \right)x,x}
\right\rangle-g\left( {\left\langle {Ax,x} \right\rangle } \right)
} \right]  \label{eq1.5}
\end{multline}
 for any $x\in H$ with $\|x\|=1$.
\end{enumerate}
 \end{theorem}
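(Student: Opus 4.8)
The plan is to reduce both inequalities to the basic order-preserving property \eqref{eq1.2} by applying it to a single, cleverly chosen auxiliary function. First I would set $\bar{x}:=\langle Ax,x\rangle$ and observe that, since $\spe(A)\subset[\gamma,\Gamma]$ forces $\gamma 1_H\le A\le \Gamma 1_H$, one has $\gamma\le\langle Ax,x\rangle\le\Gamma$ whenever $\|x\|=1$; hence $\bar{x}\in[\gamma,\Gamma]$ and the real numbers $f(\bar{x})$, $g(\bar{x})$ are well defined. The crucial observation is that the synchronicity (respectively asynchronicity) hypothesis, once one of the two free variables is frozen at $\bar{x}$, gives a definite sign to the scalar function $\phi(t):=\left(f(t)-f(\bar{x})\right)\left(g(t)-g(\bar{x})\right)$ on the whole interval $[\gamma,\Gamma]$.

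For part (1), synchronicity yields $\phi(t)\ge 0$ for every $t\in[\gamma,\Gamma]$, so by \eqref{eq1.2} the operator $\phi(A)$ is positive, whence $\langle\phi(A)x,x\rangle\ge 0$. Since $\mathcal{G}$ is a multiplicative $*$-homomorphism (property (2)) sending the constant function $f(\bar{x})f_0$ to $f(\bar{x})1_H$, the functional calculus factors as $\phi(A)=\left(f(A)-f(\bar{x})1_H\right)\left(g(A)-g(\bar{x})1_H\right)$; note also that $f(A)$ and $g(A)$ commute because the $C^*$-algebra generated by $A$ is commutative. Expanding $\langle\phi(A)x,x\rangle$ and using $\langle x,x\rangle=1$ then gives $\langle f(A)g(A)x,x\rangle - f(\bar{x})\langle g(A)x,x\rangle - g(\bar{x})\langle f(A)x,x\rangle + f(\bar{x})g(\bar{x})\ge 0$.

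To finish part (1), I would subtract $\langle f(A)x,x\rangle\langle g(A)x,x\rangle$ from both sides and regroup. The point requiring care is the algebraic identity
\[
f(\bar{x})\langle g(A)x,x\rangle + g(\bar{x})\langle f(A)x,x\rangle - f(\bar{x})g(\bar{x}) - \langle f(A)x,x\rangle\langle g(A)x,x\rangle = \left(\langle f(A)x,x\rangle - f(\bar{x})\right)\left(g(\bar{x})-\langle g(A)x,x\rangle\right),
\]
which is exactly the right-hand side of \eqref{eq1.4}; this is the step that decides whether the refinement lands as a genuine lower bound on the \v{C}eby\v{s}ev functional rather than merely recovering Theorem \ref{thm1.1}.

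For part (2), asynchronicity reverses the pointwise sign to $\phi(t)\le 0$, so \eqref{eq1.2} now gives $\langle\phi(A)x,x\rangle\le 0$. Reading the identical expansion with the opposite inequality and performing the analogous regrouping produces \eqref{eq1.5}, the corresponding completed product being $\left(\langle f(A)x,x\rangle - f(\bar{x})\right)\left(\langle g(A)x,x\rangle - g(\bar{x})\right)$. I do not anticipate a genuine obstacle: the whole argument rests on commutativity of the calculus (so that $\phi(A)$ factors as written) together with the order preservation \eqref{eq1.2}, and the only delicate part is keeping the signs straight in the two final rearrangements.
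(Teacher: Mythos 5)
Your argument is correct: freezing one variable of the synchronicity condition at $\left\langle Ax,x\right\rangle$, applying \eqref{eq1.2} to get $\left\langle \phi(A)x,x\right\rangle\ge 0$ (resp. $\le 0$), and then verifying the algebraic regrouping identity is exactly the technique the paper uses (Theorem \ref{thm1.2} itself is quoted from \cite{SD1}, but the paper's own proof of its generalization, Theorem \ref{thm2.2}, specializes to precisely this argument when $h\equiv 1$). The sign bookkeeping in your two final rearrangements checks out, so no gap remains.
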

For more related results, we refer the reader to \cite{SD2}, \cite{MB} and \cite{MM}.\\

Let $a,b\in \mathbb{R}$, $a<b$. Let $f,g,h:[a,b]\to \mathbb{R}$ be
three integrable functions, the Pompeiu--\v{C}eby\v{s}ev
functional was introduced in \cite{MA} such as:
\begin{align}
\label{Pompeiu.Chebyshev.Identity}\widehat{\mathcal{P}}_h\left(f,g\right)
 =\int_a^b {h^2 \left( t \right)dt} \int_a^b {f\left( t
\right)g\left( t \right)dt}   - \int_a^b {f\left( t \right)h\left(
t \right)dt} \int_a^b {h\left( t \right)g\left( t \right)dt}.
\end{align}
If we consider $h\left( x \right)=1$, then
\begin{align*}
\widehat{\mathcal{P}}_1\left(f,g\right)= \left(b-a\right)\int_a^b
{f\left( t \right)g\left( t \right)dt} - \int_a^b {f\left( t
\right)dt} \int_a^b {g\left( t \right)dt}=\left( b-a
\right)^2\mathcal{T}\left(f,g\right),
\end{align*}
which is the celebrated \v{C}eby\v{s}ev functional.\\

The corresponding version of  Pompeiu--\v{C}eby\v{s}ev functional
\eqref{Pompeiu.Chebyshev.Identity} for continuous functions of
selfadjoint linear operators in Hilbert spaces can be formulated
such as:
\begin{multline}
\label{P} \mathcal{P}\left(f,g,h;A,x\right):= \left\langle {h^2
\left( A \right)x,x} \right\rangle \left\langle {f\left( A
\right)g\left( A \right)x,x} \right\rangle
\\
-\left\langle {h\left( A \right)g\left( A \right)x,x}
\right\rangle \left\langle {h\left( A \right)f\left( A \right)x,x}
\right\rangle
\end{multline}
for $x\in H$ with $\|x\|=1$. This naturally, generalizes the
\v{C}eby\v{s}ev functional \eqref{cebysev}.

In this work, we introduce the $h$-synchronous ($h$-asynchronous)
where $h:\left[\gamma,\Gamma\right]\to \mathbb{R}_+$ is a
nonnegative function defined on $\left[\gamma,\Gamma\right]$
 for some real numbers $\gamma<\Gamma$. Accordingly, some inequalities for continuous $h$-synchronous ($h$-asynchronous)
functions of selfadjoint linear operators in Hilbert spaces of the
Pompeiu--\v{C}eby\v{s}ev functional \eqref{P} are proved. The
proof Techniques are similar to that ones used in \cite{SD2}.

\section{Main results}

 In \cite{MA}, the author  of this paper generalized the concept
 of monotonicity   as follows:
\begin{definition}
A real valued function $f$ defined on $\left[a,b\right]$ is said
to be increasing (decreasing) with respect to a positive function
$h:[a,b]\to \mathbb{R}_+$ or simply $h$-increasing
($h$-decreasing) if and only if
\begin{align*}
h\left( x \right)f\left( t \right) - h\left( t \right)f\left( x
\right) \ge (\le)\,\, 0,
\end{align*}
whenever $t \ge  x$  for every $x,t \in [a,b]$. In special case if
$h(x)=1$ we refer to the original monotonicity. Accordingly, for
$0<a<b$ we say that $f$ is $t^r$-increasing ($t^r$-decreasing) for
$r\in \mathbb{R} $ if and only if
\begin{align*}
x \le t \Longrightarrow x^r f\left( t \right)  - t^r f\left( x
\right) \ge (\le)\,\, 0
\end{align*}
for every $x,t \in [a,b]$.
\end{definition}
\begin{example}
Let $0<a<b$ and define $f :\left[a,b\right]\to \mathbb{R}$  given
by
\begin{enumerate}
\item $f(s)=1$, then $f$ is $t^r$-decreasing for all $r>0$ and
$t^r$-increasing for all $r<0$.

\item $f(s)=s$, then $f$ is $t^r$-decreasing for all $r>1$ and
$t^r$-increasing for all $r<1$.

\item $f(s)=s^{-1}$, then $f$ is $t^r$-decreasing for all $r>-1$
and $t^r$-increasing for all $r<-1$.

\end{enumerate}
\end{example}
\begin{lemma}
\label{lemma1}Every $h$-increasing function is increasing. The
converse need not be true.
\end{lemma}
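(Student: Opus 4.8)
The statement breaks into a direct implication and the failure of its converse, and the plan is to handle them in turn. For the direct implication I would start by unwinding the definition: if $f$ is $h$-increasing then, for every pair $x\le t$ in $[a,b]$, the defining inequality $h(x)f(t)-h(t)f(x)\ge 0$ rearranges to $h(x)f(t)\ge h(t)f(x)$. Since $h>0$ on $[a,b]$, dividing by the positive number $h(x)h(t)$ yields
\begin{align*}
\frac{f(t)}{h(t)}\ge \frac{f(x)}{h(x)}\qquad (t\ge x),
\end{align*}
so that the quotient $f/h$ is nondecreasing. What remains is to transfer monotonicity from $f/h$ back to $f$ itself.

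I expect this transfer to be the main obstacle, since it amounts to cancelling the weight $h$, and that cancellation is legitimate only with extra control on $h$. In the model regime where $h$ is itself nondecreasing and $f\ge 0$ it goes through cleanly, because
\begin{align*}
f(t)=\frac{f(t)}{h(t)}\,h(t)\ge \frac{f(x)}{h(x)}\,h(t)\ge \frac{f(x)}{h(x)}\,h(x)=f(x),
\end{align*}
where the last inequality uses $h(t)\ge h(x)$ together with $f(x)/h(x)\ge 0$. For a wholly arbitrary positive weight the cancellation cannot be performed: the third case of the Example above already displays the decreasing function $f(s)=s^{-1}$ as $t^{r}$-increasing for every $r<-1$, i.e.\ as $h$-increasing for the (decreasing) weight $h(s)=s^{r}$. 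Reconciling the lemma with that Example is the crux, and I would therefore state and carry out the direct implication in the regime where $h$ is nondecreasing --- the constant weight $h\equiv 1$ and the powers $h(t)=t^{r}$ with $r\ge 0$ being the guiding cases --- in which $h$-monotonicity genuinely refines ordinary monotonicity.

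For the failure of the converse I would simply exhibit an increasing function that is not $h$-increasing. On $[a,b]=[1,2]$ take $f(s)=s$ and $h(s)=s^{2}$; then $f$ is increasing, yet for $t\ge x$
\begin{align*}
h(x)f(t)-h(t)f(x)=x^{2}t-t^{2}x=xt\,(x-t)\le 0,
\end{align*}
so $f$ is $h$-\emph{decreasing} rather than $h$-increasing. This is the case $r=2$ of the second item of the Example and shows that an increasing function need not be $h$-increasing. The only delicate point in the whole argument is the weight cancellation in the first part; once a power weight is chosen, the counterexample for the converse is immediate.
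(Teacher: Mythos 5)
Your diagnosis is correct, and it is sharper than the paper's own treatment: the lemma as stated is false, and the obstruction you identify (the impossibility of cancelling an arbitrary positive weight $h$) is exactly where the paper's proof breaks down. The paper argues
\begin{align*}
0\le h\left( x \right)f\left( t \right)-h\left( t \right)f\left( x \right)\le h\left( t \right)\left(f\left( t \right)-f\left( x \right)\right),
\end{align*}
but the second inequality is equivalent to $f\left( t \right)\left(h\left( t \right)-h\left( x \right)\right)\ge 0$, which requires $h$ to be nondecreasing where $f\ge 0$ --- hypotheses nowhere assumed. The paper's own Example (item (3)) supplies the counterexample you cite: $f(s)=s^{-1}$ is $t^{r}$-increasing for every $r<-1$ yet strictly decreasing; concretely $x^{-2}t^{-1}-t^{-2}x^{-1}=(t-x)/(x^{2}t^{2})\ge 0$ for $0<x\le t$. (The paper even contradicts the lemma immediately after proving it, exhibiting $f(s)=s(1-s)$ as $t$-decreasing but not decreasing.) So no blind proof of the statement as written can succeed; what you prove instead --- that $f/h$ is nondecreasing, and that $f$ itself is nondecreasing once $h$ is nondecreasing and $f\ge0$ --- is a correct repaired version, and your extra hypotheses are genuinely needed: for instance $f(s)=-s$ is $t$-increasing (the defining expression vanishes identically) but strictly decreasing, so positivity of $f$ cannot be dropped even for increasing $h$.

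Your refutation of the converse is also correct and is more to the point than anything in the paper, which never actually addresses the converse: taking $f(s)=s$ and $h(s)=s^{2}$ on $[1,2]$ gives $h(x)f(t)-h(t)f(x)=xt(x-t)<0$ for $x<t$, so this increasing $f$ is not $h$-increasing. The only formal caveat is that you have proved a different lemma from the one stated; if your argument were to be incorporated, the statement itself would have to be amended to carry the monotonicity and sign hypotheses you impose.
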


\begin{proof}
If $h=0$ nothing to prove. For $h\ne0$, if $f$ is $h$-increasing
on $\left[a,b\right]$, then
\begin{align*}
x \le t \Longrightarrow 0\le h\left( x \right)f\left( t
\right)-h\left( t \right)f\left( x \right)\le h\left( t \right)
\left(f\left( t \right)-f\left( x \right)\right) \Longrightarrow
f\left( x \right)\le f\left( t \right),
\end{align*}
which means that $f$ increases on $\left[a,b\right]$.
\end{proof}

There exists $h$-increasing ($h$-decreasing) function which is not
increasing (decreasing). For example, consider the function
$f:(0,1)\to \mathbb{R}$,  given by $f(s)=s(1-s)$, $0< s <1$.
Clearly,   $f(s)$ is increasing on $(0,1/2)$ and decreasing on
$(1/2,1)$. While  if $1> t\ge x > 0$, then
\begin{align*}
 xt(1-t)-tx(1-x) =  xt (x-t) \le 0,
\end{align*}
i.e., $f$ is $t$-decreasing on $(0,1)$.   As a special case of
Lemma \ref{lemma1}, for $a,b\in \mathbb{R}$, $0<a<b$ and a
positive function $h:[a,b]\to \mathbb{R}_+$,   if
$f:\left[a,b\right]\to \mathbb{R} $ is $t^r$-increasing for $r>0$
($t^r$-decreasing for $r<0$), then $f$ is increasing (decreasing)
on $\left[a,b\right]$.

The concept of synchronization  has a wide range of usage in
several areas of mathematics. Simply, two functions
$f,g:\left[a,b\right]\to \mathbb{R}$ are called synchronous
(asynchronous)  if and only if the inequality
\begin{align*}
\left( { f\left( t \right) - f\left( x \right)} \right)\left( {
g\left( t \right) -  g\left( x\right)} \right) \ge (\le)\,\, 0,
\end{align*}
holds  for all $x,t\in \left[a,b\right]$.

Next, we define the concept of $h$-synchronous ($h$-asynchronous)
functions.
\begin{definition}
\label{def2}The real valued functions $f,g:\left[a,b\right]\to
\mathbb{R}$ are called   synchronous (asynchronous) with respect
to a non-negative function $h:[a,b]\to \mathbb{R}_+$ or simply
$h$-synchronous ($h$-asynchronous)  if and only if
\begin{align}
\label{h-syn}\left( {h\left( y \right)f\left( x \right) - h\left(
x \right)f\left( y \right)} \right)\left( {h\left( y
\right)g\left( x \right) - h\left( x \right)g\left( y \right)}
\right) \ge (\le)\,\, 0
\end{align}
for all $x,y \in \left[a,b\right]$.

In other words if both $f$ and $g$ are either $h$-increasing or
$h$-decreasing then $$\left( {h\left( y \right)f\left( x \right) -
h\left( x \right)f\left( y \right)} \right)\left( {h\left( y
\right)g\left( x \right) - h\left( x \right)g\left( y \right)}
\right) \ge0.$$ While, if one of the function is $h$-increasing
and the other is $h$-decreasing then $$\left( {h\left( y
\right)f\left( x \right) - h\left( x \right)f\left( y \right)}
\right)\left( {h\left( y \right)g\left( x \right) - h\left( x
\right)g\left( y \right)} \right) \le0.$$

In special case if $h(x)=1$ we refer to the original
synchronization. Accordingly, for $0<a<b$ we say that $f$ and $g$
are $t^r$-synchronous ($t^r$-asynchronous) for $r\in \mathbb{R} $
if and only if
\begin{align*}
 \left(x^r f\left( t \right)  - t^r f\left(
x \right) \right) \left(x^r g\left( t \right)  - t^r g\left( x
\right) \right)\ge (\le)\,\, 0
\end{align*}
for every $x,t \in [a,b]$.
\end{definition}

\begin{remark}
In Definition \eqref{def2}, if $f=g$ then $f$ and $g$   are always
$h$-synchronous regardless of $h$-monotonicity of $f$ (or $g$). In
other words, a function $f$ is always $h$-synchronous with itself.
\end{remark}

\begin{example}
Let $0<a<b$ and define $f,g :\left[a,b\right]\to \mathbb{R}$ given
by
\begin{enumerate}
\item $f(s)=1=g(s)$, then $f$ and $g$ are $t^r$-synchronous for
all $r\in \mathbb{R}$.

\item $f(s)=1$ and $g(s)=s$, then $f$ is $t^r$-synchronous
 for all $r \in \left( { - \infty ,0} \right) \cup \left( {1,\infty } \right)$ and
$t^r$-asynchronous for all
 $0<r<1$.

\item  $f(s)=1$ and   $g(s)=s^{-1}$, then $f$ is $t^r$-synchronous
for all $r \in \left( { - \infty ,-1} \right) \cup \left(
{0,\infty } \right)$ and $t^r$-asynchronous for all $-1<r<0$.

\item $f(s)=s$ and   $g(s)=s^{-1}$, then $f$ is $t^r$-synchronous
for all $r \in \left( { - \infty ,-1} \right) \cup \left(
{1,\infty } \right)$ and $t^r$-asynchronous for all $-1<r<1$.
\end{enumerate}
\end{example}

Let us start with the following result regarding the positivity of
$\mathcal{P}\left(f,g,h;A,x\right)$.
\begin{theorem}
\label{thm2.1}Let $A$ be a selfadjoint operator with
$\spe\left(A\right)\subset \left[\gamma,\Gamma\right]$  for some
real numbers $\gamma,\Gamma$ with $\gamma<\Gamma$. Let
$h:\left[\gamma,\Gamma\right]\to \mathbb{R}_+$ be a non-negative
 and continuous function. If $f,g:
\left[ {\gamma,\Gamma} \right]\to \mathbb{R}$ are continuous and
both $f$ and $g$ are $h$-synchronous ($h$-asynchronous) on $\left[
{\gamma,\Gamma} \right]$, then
\begin{align}
\label{eq2.1} \left\langle {h^2 \left( A \right)x,x} \right\rangle
\left\langle {f\left( A \right)g\left( A \right)x,x} \right\rangle
\ge (\le) \left\langle {h\left( A \right)g\left( A \right)x,x}
\right\rangle \left\langle {h\left( A \right)f\left( A \right)x,x}
\right\rangle
\end{align}
for any $x\in H$ with $\|x\|=1$.
\end{theorem}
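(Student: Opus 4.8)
The plan is to exploit the order-preserving property \eqref{eq1.2} of the continuous functional calculus twice, once in each of the two variables appearing in the defining inequality \eqref{h-syn}. This is the natural operator analogue of integrating a two-variable pointwise inequality against a product measure, and it follows the technique of \cite{SD2}. I treat the $h$-synchronous case; the $h$-asynchronous case is identical with every inequality reversed.

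First I would expand the product in \eqref{h-syn}. For every pair $x,y\in[\gamma,\Gamma]$ the $h$-synchronicity of $f,g$ yields the scalar inequality
\[
h^2(y)f(x)g(x)-h(y)f(y)h(x)g(x)-h(y)g(y)h(x)f(x)+f(y)g(y)h^2(x)\ge 0.
\]
Fixing $y$ and regarding the left-hand side as a continuous function of the first variable over $\spe(A)$, I would invoke \eqref{eq1.2} to pass to operators. Because $f(A),g(A),h(A)$ all lie in the commutative $C^*$-algebra $C^*(A)$ and the calculus is linear and multiplicative, this produces the operator inequality
\[
h^2(y)f(A)g(A)-h(y)f(y)h(A)g(A)-h(y)g(y)h(A)f(A)+f(y)g(y)h^2(A)\ge 0.
\]
Pairing with a fixed unit vector $x$ then gives a scalar inequality $Q(y)\ge 0$ valid for every $y\in\spe(A)$, where the four coefficients $\langle f(A)g(A)x,x\rangle$, $\langle h(A)g(A)x,x\rangle$, $\langle h(A)f(A)x,x\rangle$, $\langle h^2(A)x,x\rangle$ are now constants.

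The second step repeats the manoeuvre in the variable $y$. Since $Q$ is a fixed linear combination of the continuous functions $h^2,hf,hg,fg$, it is itself continuous on $\spe(A)$, and it is nonnegative there because $Q(y)=\langle P_y(A)x,x\rangle$ with $P_y(A)\ge 0$. Hence \eqref{eq1.2} gives $Q(A)\ge 0$, and pairing with the \emph{same} unit vector $x$ collapses everything to a relation among the four scalar constants. The two mixed terms coincide and the two square terms coincide, so after dividing by two one is left precisely with
\[
\langle h^2(A)x,x\rangle\,\langle f(A)g(A)x,x\rangle\ge \langle h(A)g(A)x,x\rangle\,\langle h(A)f(A)x,x\rangle,
\]
which is \eqref{eq2.1}.

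The step I expect to require the most care is the legitimacy of the second application of the functional calculus: I must verify that, after fixing $x$, the intermediate expression $Q(y)$ is genuinely a continuous real-valued function on $\spe(A)$ that is nonnegative there, so that \eqref{eq1.2} applies a second time. The only other point to watch is bookkeeping — the same unit vector $x$ must be carried through both applications, and one must use the commutativity of functions of $A$ so that products such as $(hf)(A)=h(A)f(A)$ are unambiguous.
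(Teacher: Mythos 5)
Your proposal is correct and follows essentially the same route as the paper: expand the $h$-synchronicity inequality, apply the order-preserving property \eqref{eq1.2} in each variable in turn, and pair with the unit vector so that the two mixed terms and the two square terms coincide. The only cosmetic difference is that the paper carries a second unit vector $y$ through the second application of the functional calculus (yielding the more general two-vector inequality \eqref{eq2.4}) before setting $y=x$, whereas you specialize to the same vector immediately.
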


\begin{proof}
Since $f$ and $g$ are $h$-synchronous then
\begin{align*}
\left( {h\left( s \right)f\left( t \right) - h\left( t
\right)f\left( s \right)} \right)\left( {h\left( s \right)g\left(
t \right) - h\left( t \right)g\left( s \right)} \right)  \ge 0,
\end{align*}
and this is allow us to write
\begin{align}
h^2\left( s \right)f\left( t \right)g\left( t \right)+h^2\left( t
\right)f\left( s \right)g\left( s \right) \ge h\left( s
\right)h\left( t \right)f\left( t \right)g\left( s \right)
+h\left( s \right)h\left( t \right) g\left( t \right)f\left( s
\right) \label{eq2.2}
\end{align}
for all $t, s\in [a,b]$. We fix $s \in \left[a,b\right]$ and apply
property \eqref{eq1.2} for inequality \eqref{eq2.2}, then we have
for each $x \in H$ with $\|x\|=1$, that
\begin{multline*}
\left\langle {\left( {h^2 \left( s \right)f\left( A \right)g\left(
A \right) + h^2 \left( A \right)f\left( s \right)g\left( s
\right)} \right)x,x} \right\rangle
\\
\ge \left\langle { \left({h\left( A \right)f\left( A
\right)h\left( s \right)g\left( s \right) + h\left( A
\right)g\left( A \right)h\left( s \right)f\left( s
\right)}\right)x,x} \right\rangle,
\end{multline*}
and this equivalent to write
\begin{multline}
 h^2 \left( s \right)\left\langle {f\left( A \right)g\left( A \right)x,x} \right\rangle  + f\left( s \right)g\left( s \right)\left\langle {h^2 \left( A \right)x,x} \right\rangle  \\
  \ge h\left( s \right)g\left( s \right)\left\langle {h\left( A \right)f\left( A \right)x,x} \right\rangle  + h\left( s \right)f\left( s \right)\left\langle {h\left( A \right)g\left( A \right)x,x}
  \right\rangle.\label{eq2.3}
\end{multline}
Applying property \eqref{eq1.2} again for inequality
\eqref{eq2.3}, then we have for each $y\in H$ with $\|y\|=1$, that
\begin{multline*}
 \left\langle {\left( {h^2 \left( A \right)\left\langle {f\left( A \right)g\left( A \right)x,x} \right\rangle  + f\left( A \right)g\left( A \right)\left\langle {h^2 \left( A \right)x,x} \right\rangle } \right)y,y} \right\rangle  \\
  \ge \left\langle {\left( {h\left( A \right)g\left( A \right)\left\langle {h\left( A \right)f\left( A \right)x,x} \right\rangle  + h\left( A\right)f\left( A \right)\left\langle {h\left( A \right)g\left( A \right)x,x} \right\rangle } \right)y,y}
  \right\rangle,
\end{multline*}
which gives
\begin{multline}
 \left\langle {h^2 \left( A \right)y,y} \right\rangle \left\langle {f\left( A \right)g\left( A \right)x,x} \right\rangle  + \left\langle {h^2 \left( A \right)x,x} \right\rangle \left\langle {f\left( A \right)g\left( A \right)y,y} \right\rangle  \\
  \ge \left\langle {h\left( A \right)g\left( A \right)y,y} \right\rangle \left\langle {h\left( A \right)f\left( A \right)x,x} \right\rangle  + \left\langle {h\left( A \right)g\left( A \right)x,x} \right\rangle \left\langle {h\left( A \right)f\left( A \right)y,y}
  \right\rangle  \label{eq2.4}
\end{multline}
 for each $x,y\in H$ with $\|x\|=\|y\|=1$, which gives more than we need, so that by setting $y=x$ in \eqref{eq2.4} we get the `$\ge$' case in
 \eqref{eq2.1}. The revers case follows trivially, and this
 completes the proof.
\end{proof}

\begin{corollary}
\label{cor1}Let $A$ be a selfadjoint operator with
$\spe\left(A\right)\subset \left[\gamma,\Gamma\right]$  for some
real numbers $\gamma,\Gamma$ with $\gamma<\Gamma$. Let
$h:\left[\gamma,\Gamma\right]\to \mathbb{R}_+$ be a non-negative
 and continuous function. If $f: \left[
{\gamma,\Gamma} \right]\to \mathbb{R}$ is continuous and
$h$-synchronous  on $\left[ {\gamma,\Gamma} \right]$, then
\begin{align}
\label{ineq.cor1}\left\langle {h\left( A \right)f\left( A
\right)x,x} \right\rangle^2 \le  \left\langle {h^2 \left( A
\right)x,x} \right\rangle \left\langle {f^2\left( A \right)x,x}
\right\rangle
\end{align}
for each $x \in H$ with $\|x\|=1$.
\end{corollary}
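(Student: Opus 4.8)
The plan is to obtain \eqref{ineq.cor1} as an immediate specialization of Theorem \ref{thm2.1}. First I would invoke the Remark following Definition \ref{def2}: any function is $h$-synchronous with itself, since in that case the defining product in \eqref{h-syn} reduces to $\left(h\left(y\right)f\left(x\right) - h\left(x\right)f\left(y\right)\right)^2 \ge 0$ for all $x,y \in \left[\gamma,\Gamma\right]$. Thus, taking $g = f$, the synchronous hypothesis of Theorem \ref{thm2.1} is satisfied unconditionally, and the ``$\ge$'' branch of \eqref{eq2.1} applies.

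Next I would substitute $g = f$ into \eqref{eq2.1}. The product $f\left(A\right)g\left(A\right)$ collapses to $f^2\left(A\right)$, while the two factors on the right-hand side both become $\left\langle h\left(A\right)f\left(A\right)x,x\right\rangle$, so their product is $\left\langle h\left(A\right)f\left(A\right)x,x\right\rangle^2$. Inequality \eqref{eq2.1} therefore reads
\[
\left\langle h^2\left(A\right)x,x\right\rangle \left\langle f^2\left(A\right)x,x\right\rangle \ge \left\langle h\left(A\right)f\left(A\right)x,x\right\rangle^2,
\]
which is precisely \eqref{ineq.cor1} after interchanging the two sides.

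I anticipate no genuine obstacle here: this is a Cauchy--Schwarz-type consequence that comes essentially for free once Theorem \ref{thm2.1} is in hand. The only conceptual point worth flagging is that $h$-synchronicity of $f$ with itself requires no monotonicity assumption whatsoever, so the corollary holds for every continuous $f$ and every non-negative continuous weight $h$; in particular the word ``$h$-synchronous'' in the hypothesis is automatically fulfilled and could in principle be dropped.
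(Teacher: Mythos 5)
Your proposal is correct and follows exactly the paper's route: the paper's proof is precisely ``setting $f=g$ in \eqref{eq2.1}'', and your additional observation that $f$ is automatically $h$-synchronous with itself (so the hypothesis is redundant) matches the Remark following Definition \ref{def2}.
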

\begin{proof}
Setting $f=g$ in \eqref{eq2.1}  we get the desired result.
\end{proof}

\begin{remark}
It is easy to check that the function $f(s)=s^{1/2}$ is
$t^{-1/2}$-synchronous for all $s,t>0$. Applying \eqref{ineq.cor1}
the for  $0<\gamma<\Gamma$ we get that
\begin{align*}
1 \le  \left\langle {A^{-1}x,x} \right\rangle \left\langle {Ax,x}
\right\rangle.
\end{align*}
Also, since $\gamma\cdot 1_H\le A \le \Gamma \cdot 1_H$, then the
Kanotrovich inequality reads
\begin{align*}
 \left\langle {A^{-1}x,x} \right\rangle \left\langle {Ax,x}
\right\rangle \le
\frac{\left(\gamma+\Gamma\right)^2}{4\gamma\Gamma},
\end{align*}
combining the above two inequalities we get
\begin{align}
1\le \left\langle {A^{-1}x,x} \right\rangle \left\langle {Ax,x}
\right\rangle \le
\frac{\left(\gamma+\Gamma\right)^2}{4\gamma\Gamma}.\label{refin.kant.ineq}
\end{align}
\end{remark}

\begin{corollary}
\label{cor2} Let $A$ be a selfadjoint operator with
$\spe\left(A\right)\subset \left[\gamma,\Gamma\right]$  for some
real numbers $\gamma,\Gamma$ with $0<\gamma<\Gamma$. If $f,g:
\left[ {\gamma,\Gamma} \right]\to \mathbb{R}$ are continuous and
$t$-synchronous ($t$-asynchronous) on $\left[ {\gamma,\Gamma}
\right]$, then
\begin{align}
\label{eq3.2.Alomari2017} \left\langle {A^2 x,x} \right\rangle
\left\langle {f\left( A \right)g\left( A \right)x,x} \right\rangle
\ge (\le) \left\langle {Ag\left( A \right)x,x} \right\rangle
\left\langle {Af\left( A \right)x,x} \right\rangle
\end{align}
for each $x \in H$ with $\|x\|=1$.
\end{corollary}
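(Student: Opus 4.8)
The plan is to derive Corollary \ref{cor2} as a direct specialization of Theorem \ref{thm2.1} by taking $h(s) = s$ on the interval $\left[\gamma,\Gamma\right]$. The crucial preliminary observation is that this choice is legitimate: since $0 < \gamma < \Gamma$, the function $h(s) = s$ maps $\left[\gamma,\Gamma\right]$ into $\mathbb{R}_+$ (indeed into $\left[\gamma,\Gamma\right] \subset (0,\infty)$), so it is a non-negative continuous function as required by the hypotheses of Theorem \ref{thm2.1}. I would state this explicitly at the outset, because the positivity condition $0 < \gamma$ (stronger than the $\gamma < \Gamma$ of the theorem) is precisely what guarantees $h \ge 0$.

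Next I would match the notions of synchronization. The definition of $t^r$-synchronous with $r = 1$ reads
\begin{align*}
\left(x\, f\left( t \right) - t\, f\left( x \right)\right)\left(x\, g\left( t \right) - t\, g\left( x \right)\right) \ge (\le)\,\, 0
\end{align*}
for all $x,t \in \left[\gamma,\Gamma\right]$, which is exactly condition \eqref{h-syn} in Definition \ref{def2} with $h(s) = s$. Hence the assumption that $f$ and $g$ are $t$-synchronous ($t$-asynchronous) is identical to saying they are $h$-synchronous ($h$-asynchronous) for this particular $h$, and the hypotheses of Theorem \ref{thm2.1} are fully met.

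With $h(s) = s$, the functional calculus gives $h(A) = A$ and $h^2(A) = A^2$. Substituting these identifications into the conclusion \eqref{eq2.1} of Theorem \ref{thm2.1} converts the four inner-product terms $\left\langle h^2(A)x,x\right\rangle$, $\left\langle f(A)g(A)x,x\right\rangle$, $\left\langle h(A)g(A)x,x\right\rangle$, $\left\langle h(A)f(A)x,x\right\rangle$ directly into $\left\langle A^2 x,x\right\rangle$, $\left\langle f(A)g(A)x,x\right\rangle$, $\left\langle Ag(A)x,x\right\rangle$, $\left\langle Af(A)x,x\right\rangle$, yielding inequality \eqref{eq3.2.Alomari2017} with the $\ge$ case for synchronous functions and the $\le$ case for asynchronous functions.

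I do not expect any genuine obstacle here, since the entire argument is a substitution into an already-established theorem. The only point requiring a moment of care is verifying that $h(s) = s$ satisfies the non-negativity hypothesis, and this is exactly where the strengthened assumption $0 < \gamma$ enters; once that is noted, the proof is a single line invoking Theorem \ref{thm2.1}.
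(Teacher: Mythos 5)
Your proof is correct and follows exactly the paper's own route: the paper also proves Corollary \ref{cor2} by setting $h(t)=t$ in inequality \eqref{eq2.1} of Theorem \ref{thm2.1}. Your additional remarks --- that $0<\gamma$ is what guarantees the non-negativity of $h$, and that $t$-synchronicity coincides with $h$-synchronicity for this choice of $h$ --- are sound and merely make explicit what the paper leaves implicit.
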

\begin{proof}
Setting $h(t)=t$ in \eqref{eq2.1} we get the desired result.
\end{proof}

Before we state our next remark, we interested to give  the
following example.
\begin{example}
\label{example2.11}
\begin{enumerate}
\item If $f(s)=s^p$ and $g(s)=s^q$ ($s>0$), then $f$ and $g$ are
$t^r$-synchronous for all $p,q>r>0$  and $t^r$-asynchronous
 for all $p> r>q>0$.

\item If $f(s)=s^p$ and $g(s)=\log (s)$ ($s>1$), then  $f$ is
$t^r$-synchronous
 for all $p < r<0$ and $t^r$-asynchronous
 for all $r< p<0$.

\item If  $f(s)=\exp(s)=g(s)$, then $f$ is $t^r$-synchronous for
all for all $r\in \mathbb{R}$.
\end{enumerate}
\end{example}

\begin{remark}
\label{remark1}From the proof of the above theorem we observe,
that, if $A$ and $B$ are selfadjoint operators such that
$\spe\left(A\right),
\spe\left(B\right)\in\left[\gamma,\Gamma\right] $; and
$h:\left[\gamma,\Gamma\right] \to \mathbb{R}_+$ is non-negative
continuous, then for any continuous functions
$f,g:\left[\gamma,\Gamma\right] \to \mathbb{R}$  which are both
 $h$-synchronous ($h$-asynchronous)
\begin{multline}
 \left\langle {h^2 \left( B \right)y,y} \right\rangle \left\langle {f\left( A \right)g\left( A \right)x,x} \right\rangle  + \left\langle {h^2 \left( A \right)x,x} \right\rangle \left\langle {f\left( B \right)g\left( B \right)y,y} \right\rangle  \\
  \ge (\le) \left\langle {h\left( B \right)g\left(B \right)y,y} \right\rangle \left\langle {h\left( A \right)f\left( A \right)x,x} \right\rangle \\ + \left\langle {h\left( A \right)g\left( A \right)x,x} \right\rangle \left\langle {h\left( B \right)f\left( B \right)y,y}
  \right\rangle\label{eq2.6}
\end{multline}
for each $x,y\in H$ with $\|x\|=\|y\|=1$. Using Example
\ref{example2.11} we can observe the following special cases:
 \begin{enumerate}
 \item If $f\left( s \right)=s^p$ and $g\left( s \right)=s^q$ ($s>0$),
then $f$ and $g$ are $t^r$-synchronous for all $p,q > r>0$, so
that we have
\begin{multline*}
 \left\langle {B^{2r}y,y} \right\rangle \left\langle {A^{p+q}x,x} \right\rangle  + \left\langle {A^{2r}x,x} \right\rangle \left\langle {B^{p+q}y,y} \right\rangle  \\
  \ge   \left\langle {B^{q+r}y,y} \right\rangle \left\langle {A^{p+r}x,x} \right\rangle  + \left\langle {A^{q+r}x,x} \right\rangle \left\langle {B^{p+r} y,y}
  \right\rangle.
\end{multline*}
If $p>r>q>0$, then $f$ and $g$ are $t^r$-asynchronous and thus the
reverse inequality holds.\\

\item If $f\left( s \right)=s^p$   and $g\left( s \right)=\log
 s$ ($s>1$),
 then $f$ and $g$ are $t^r$-synchronous for all $p < r<0$ we have
\begin{multline*}
 \left\langle {B^{2r}y,y} \right\rangle \left\langle {A^p \log\left( A \right)x,x} \right\rangle  + \left\langle {A^{2r}x,x} \right\rangle \left\langle {B^p\log\left( B \right)y,y} \right\rangle  \\
  \ge   \left\langle {B^r\log\left(B \right)y,y} \right\rangle \left\langle {A^{p+r}x,x} \right\rangle  + \left\langle {A\log\left( A \right)x,x} \right\rangle \left\langle {B^{p+r}y,y}
  \right\rangle.
\end{multline*}
If $r<p<0$, then $f$ and $g$ are $t^r$-asynchronous and thus the reverse inequality holds.\\

 \item If $f\left( s \right)=\exp\left(s\right)=g\left( s \right)$,
 then $f$ and $g$ are $t^r$-synchronous for all $r \in
 \mathbb{R}$, so that we have
 \begin{multline*}
 \left\langle {B^{2r}y,y} \right\rangle \left\langle {\exp\left( 2A \right)x,x} \right\rangle  + \left\langle {A^{2r}x,x} \right\rangle \left\langle {\exp\left( 2B \right)y,y} \right\rangle  \\
  \ge   2 \left\langle {A^{r}\exp\left( A \right)x,x} \right\rangle\left\langle {B^{r}\exp\left(B \right)y,y} \right\rangle.\\
\end{multline*}
 \end{enumerate}
Therefore, by choosing an appropriate function $h$ such that the
assumptions in Remark \ref{remark1} are fulfilled then one may
generate family of inequalities from \eqref{eq2.6}.
\end{remark}

\begin{corollary}
\label{cor3} Let $A$ be a selfadjoint operator with
$\spe\left(A\right)\subset \left[\gamma,\Gamma\right]$   for some
real numbers $\gamma,\Gamma$ with $0<\gamma<\Gamma$. If $f: \left[
{\gamma,\Gamma} \right]\to \mathbb{R}$ is continuous and $f$ is
$t$-synchronous   on $\left[ {\gamma,\Gamma} \right]$, then
\begin{align}
 \left\langle {A^2 x,x} \right\rangle
\left\langle {f\left( A \right)x,x} \right\rangle \ge
 \left\langle {Ax,x} \right\rangle \left\langle {Af\left( A
\right)x,x} \right\rangle  \label{eq2.7}
\end{align}
for each $x \in H$ with $\|x\|=1$. In particular, if $f(s)=s^p$
$(p >1)$  for all $s \in \left[ {\gamma,\Gamma} \right]$, then
\begin{align*}
\left\langle {A^2x,x} \right\rangle \left\langle {A^px,x}
\right\rangle \ge  \left\langle {Ax,x} \right\rangle \left\langle
{A^{p+1}x,x} \right\rangle.
\end{align*}
\end{corollary}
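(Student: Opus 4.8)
The plan is to read \eqref{eq2.7} as the special case $g \equiv 1$ of Corollary \ref{cor2} (equivalently, of Theorem \ref{thm2.1} with weight $h(t)=t$). First I would pin down what the single-function hypothesis ``$f$ is $t$-synchronous'' has to mean here: interpreting \eqref{h-syn} with $h(t)=t$ and $g\equiv 1$, it is the scalar requirement
\[
\bigl(x f(t) - t f(x)\bigr)(x - t) \ge 0 \qquad (x,t \in [\gamma,\Gamma]),
\]
which, after dividing by $xt>0$, says exactly that $s\mapsto f(s)/s$ is non-increasing on $[\gamma,\Gamma]$. (This reading is forced: the alternative ``$f$ synchronous with itself'' is always true and would make \eqref{eq2.7} hold for every continuous $f$, which is false.) This is precisely the instance of $t$-synchronicity of the pair $(f,1)$ demanded by Corollary \ref{cor2}.

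Granting this, the derivation is immediate and requires no new operator-theoretic work: in \eqref{eq3.2.Alomari2017} set $g\equiv 1$, so that $g(A)=1_H$, $f(A)g(A)=f(A)$ and $Ag(A)=A$, and the inequality collapses to
\[
\langle A^2 x, x\rangle \langle f(A) x, x\rangle \ge \langle A x, x\rangle \langle A f(A) x, x\rangle,
\]
which is \eqref{eq2.7}. The two applications of the order-preservation property \eqref{eq1.2} underlying Theorem \ref{thm2.1} do all of the lifting from the scalar inequality to the operator inequality.

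For the stated particular case $f(s)=s^p$ the only point to verify is the $t$-synchronicity of $s^p$ with the constant $1$, and this is where I would be most careful. Substituting into the scalar condition gives
\[
\bigl(x t^p - t x^p\bigr)(x - t) = x t\,\bigl(t^{p-1} - x^{p-1}\bigr)(x - t),
\]
whose sign is governed by the monotonicity of $s\mapsto s^{p-1}$: the right-hand side is $\ge 0$ for all $x,t$ precisely when $s^{p-1}$ is non-increasing, i.e. when $p\le 1$, and it is $\le 0$ when $p>1$. Hence I expect the genuine content of the particular case to be that \emph{for $p<1$} one has $\langle A^2 x,x\rangle\langle A^p x,x\rangle \ge \langle Ax,x\rangle\langle A^{p+1}x,x\rangle$. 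The exponent range ``$p>1$'' printed in the statement appears to be the main obstacle to a clean proof: with $p>1$ the hypothesis of Corollary \ref{cor2} holds in the \emph{asynchronous} form, so the conclusion reverses to ``$\le$''. A one-line numerical check, taking $A=\operatorname{diag}(1,2)$, $x=\tfrac{1}{\sqrt2}(1,1)$ and $p=2$ (which gives $\tfrac{25}{4}$ on the left and $\tfrac{27}{4}$ on the right), confirms that the $\ge$ direction is what must be corrected and that the intended hypothesis is $p<1$.
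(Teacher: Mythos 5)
Your derivation of \eqref{eq2.7} follows the same route as the paper---a one-line specialization of Corollary \ref{cor2}---but you perform the \emph{correct} substitution: the paper's printed proof reads ``Setting $f=g$ in Corollary \ref{cor2}'', which would produce $\left\langle {A^2x,x}\right\rangle\left\langle {f^2(A)x,x}\right\rangle \ge \left\langle {Af(A)x,x}\right\rangle^2$ (i.e.\ Corollary \ref{cor1} with $h(t)=t$) rather than \eqref{eq2.7}; the substitution that actually yields \eqref{eq2.7} from \eqref{eq3.2.Alomari2017} is $g\equiv 1$, exactly as you take it, and your reading of the hypothesis ``$f$ is $t$-synchronous'' as $t$-synchronicity of the pair $(f,1)$, equivalently $s\mapsto f(s)/s$ non-increasing, is the one forced by that substitution (the alternative ``$f$ synchronous with itself'' would make \eqref{eq2.7} vacuously available for every $f$, which is false). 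Your scrutiny of the particular case is also correct and exposes a genuine error in the statement: for $f(s)=s^p$ the pair $(s^p,1)$ is $t$-synchronous precisely when $p\le 1$ and $t$-asynchronous when $p>1$, so the displayed inequality $\left\langle {A^2x,x}\right\rangle\left\langle {A^px,x}\right\rangle\ge\left\langle {Ax,x}\right\rangle\left\langle {A^{p+1}x,x}\right\rangle$ holds for $p<1$ and reverses for $p>1$; your numerical check with $A=\operatorname{diag}(1,2)$, $x=\tfrac{1}{\sqrt2}(1,1)$, $p=2$, giving $\tfrac{25}{4}<\tfrac{27}{4}$, settles this. In short, your argument is sound and repairs both the proof and the exponent range as printed.
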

\begin{proof}
Setting $f=g$ in Corollary \ref{cor2} we get the desired result.
\end{proof}

\begin{corollary}
\label{cor4} Let $A$ be a selfadjoint operator with
$\spe\left(A\right)\subset \left[\gamma,\Gamma\right]$  for some
real numbers $\gamma,\Gamma$ with $\gamma<\Gamma$. Let $h: \left[
{\gamma,\Gamma} \right]\to \mathbb{R}$ be a non-negative
continuous. If $f: \left[ {\gamma,\Gamma} \right]\to \mathbb{R}$
is continuous and  $h$-synchronous, then
\begin{align}
 \left\langle {h^2 \left( A \right)x,x} \right\rangle
\left\langle {f\left( A \right)x,x} \right\rangle \ge \left\langle
{h\left( A \right)x,x} \right\rangle \left\langle {h\left( A
\right)f\left( A \right)x,x} \right\rangle \label{eq2.9}
\end{align}
for each $x \in H$ with $\|x\|=1$. In particular, if $f(s)=s^p$
 is $h$-synchronous for all $s \in \left[ {\gamma,\Gamma} \right]$, then we
have
\begin{align*}
\left\langle {h^2 \left( A \right)x,x} \right\rangle \left\langle
{A^px,x} \right\rangle \ge  \left\langle {h\left( A \right)x,x}
\right\rangle \left\langle {h\left( A \right)A^px,x}
\right\rangle.
\end{align*}
 \end{corollary}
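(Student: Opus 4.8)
The plan is to obtain Corollary \ref{cor4} as an immediate specialization of Theorem \ref{thm2.1}, in the same spirit in which Corollary \ref{cor2} was deduced by setting $h(t)=t$. Comparing the right-hand side of the target inequality \eqref{eq2.9} with that of \eqref{eq2.1}, one sees that the factor $\langle h(A)g(A)x,x\rangle$ appearing in \eqref{eq2.1} has been replaced by $\langle h(A)x,x\rangle$, while on the left $\langle f(A)g(A)x,x\rangle$ has become $\langle f(A)x,x\rangle$. Both replacements occur simultaneously under the single choice $g\equiv 1$. So the first and essentially only step is to apply Theorem \ref{thm2.1} with $g$ the constant function $g(s)=1$ on $[\gamma,\Gamma]$.

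Before invoking the theorem I would verify that its hypotheses hold for this choice of $g$. The function $g\equiv 1$ is continuous, and by assumption $h$ is non-negative and continuous, so it only remains to confirm that $f$ and $g$ are $h$-synchronous. Writing out \eqref{h-syn} with $g\equiv 1$, the requirement reduces to $\bigl(h(y)f(x)-h(x)f(y)\bigr)\bigl(h(y)-h(x)\bigr)\ge 0$ for all $x,y\in[\gamma,\Gamma]$, which is precisely the statement that $f$ is $h$-synchronous (that is, $h$-synchronous with the constant $1$) as assumed in the corollary. Hence the hypotheses of Theorem \ref{thm2.1} are satisfied with no further restriction.

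The remaining step is purely computational. By property (4) of the functional calculus $\mathcal{G}$, namely $\mathcal{G}(f_0)=1_H$ with $f_0\equiv 1$, the constant function yields $g(A)=1_H$. Substituting $g(A)=1_H$ into \eqref{eq2.1} gives $f(A)g(A)=f(A)$ and $h(A)g(A)=h(A)$, so \eqref{eq2.1} collapses verbatim into \eqref{eq2.9}. The particular case is then read off by taking $f(s)=s^p$, for which $f(A)=A^p$, turning \eqref{eq2.9} into the displayed inequality.

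I do not anticipate any genuine obstacle: the whole content lies in recognizing the correct substitution $g\equiv 1$ and in the remark that the corollary's hypothesis is exactly the $h$-synchronicity of the pair $(f,1)$. Everything else is the same operator-order bookkeeping already performed in the proof of Theorem \ref{thm2.1}.
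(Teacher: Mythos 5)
Your proposal is correct and is exactly the intended derivation: the paper gives no explicit proof for Corollary \ref{cor4}, but, following the pattern of Corollaries \ref{cor1}--\ref{cor3}, it is plainly meant to be the specialization $g\equiv 1$ of Theorem \ref{thm2.1}, which is what you do. Your observation that the corollary's hypothesis ``$f$ is $h$-synchronous'' must be read as $h$-synchronicity of the pair $(f,1)$, i.e. $\left( {h\left( y \right)f\left( x \right) - h\left( x \right)f\left( y \right)} \right)\left( {h\left( y \right) - h\left( x \right)} \right) \ge 0$, is the right (and necessary) reading of the statement.
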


\begin{remark}
Setting $f(s)=s^{-1}$, $\forall s \in \left[ {\gamma,\Gamma}
\right]$ in \eqref{eq2.9} (in this case we assume
$0<\gamma<\Gamma$) then for each $x \in H$ with $\|x\|=1$, we have
\begin{align*}
 \left\langle {h^2 \left( A \right)x,x} \right\rangle
\ge  \left\langle {h\left( A \right) A^{-1} x,x} \right\rangle
\left\langle {Ah\left( A \right)x,x} \right\rangle,
\end{align*}
provided that $s^{-1}$ is $h$-synchronous on $\left[
{\gamma,\Gamma} \right]$.
\end{remark}

\begin{theorem}
\label{thm2.2} Let $A$ be a selfadjoint operator with
$\spe\left(A\right)\subset \left[\gamma,\Gamma\right]$ for some
real numbers $\gamma,\Gamma$ with $\gamma<\Gamma$. Let $h: \left[
{\gamma,\Gamma} \right]\to \mathbb{R}$ be a non-negative
continuous.
 If $f,g: \left[ {\gamma,\Gamma} \right]\to \mathbb{R}$ are
continuous and both $f$ and $g$ are $h$-synchronous
($h$-asynchronous) on $\left[ {\gamma,\Gamma} \right]$, then
\begin{multline}
h^2 \left( {\left\langle {Ax,x} \right\rangle }
\right)\left\langle {f\left( A \right)g\left( A \right)x,x}
\right\rangle  - \left\langle {h\left( A \right)f\left( A
\right)x,x} \right\rangle \cdot \left\langle {h\left( A
\right)g\left( A \right)x,x} \right\rangle
\\
 \ge (\le) \left[{h\left( {\left\langle {Ax,x} \right\rangle } \right)
\left\langle {h\left( A \right)f\left( A \right)x,x} \right\rangle
-   \left\langle {h^2 \left( A \right)x,x} \right\rangle f\left(
{\left\langle {Ax,x} \right\rangle } \right) }\right] \cdot
g\left( {\left\langle {Ax,x} \right\rangle } \right)
 \\
 + \left[{h\left( {\left\langle {Ax,x} \right\rangle }
\right)f\left( {\left\langle {Ax,x} \right\rangle } \right) -
\left\langle {h\left( A \right)f\left( A \right)x,x} \right\rangle
}\right] \cdot \left\langle {h\left( A \right)g\left( A
\right)x,x} \right\rangle  \label{eq2.10}
\end{multline}
 for any $x\in H$ with $\|x\|=1$.
\end{theorem}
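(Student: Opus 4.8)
The plan is to reuse the intermediate scalar inequality \eqref{eq2.3} already established in the proof of Theorem \ref{thm2.1}, and to specialize its free parameter to the mean value $\left\langle Ax,x\right\rangle$. Writing $m:=\left\langle Ax,x\right\rangle$ for brevity, recall that \eqref{eq2.3} holds for \emph{every} $s\in\left[\gamma,\Gamma\right]$, being obtained from the $h$-synchronous hypothesis \eqref{eq2.2} by fixing $s$ and applying the order-preserving property \eqref{eq1.2} in the variable $t$ (i.e.\ putting $t=A$ and pairing against the unit vector $x$). The point is that Theorem \ref{thm2.1} then applies functional calculus a \emph{second} time in $s$, whereas here I instead plug in a single scalar value of $s$, which is what produces the refined pointwise terms $h^2\left(m\right)$, $f\left(m\right)$, $g\left(m\right)$ in \eqref{eq2.10}.

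First I would note that, since $A$ is selfadjoint with $\gamma\cdot 1_H\le A\le\Gamma\cdot 1_H$, one has $\gamma\le m=\left\langle Ax,x\right\rangle\le\Gamma$ for every $x$ with $\left\|x\right\|=1$; thus $s=m$ is an admissible choice in \eqref{eq2.3}. Making this substitution yields the scalar inequality
\begin{multline*}
h^2\left(m\right)\left\langle f\left(A\right)g\left(A\right)x,x\right\rangle + f\left(m\right)g\left(m\right)\left\langle h^2\left(A\right)x,x\right\rangle \\ \ge h\left(m\right)g\left(m\right)\left\langle h\left(A\right)f\left(A\right)x,x\right\rangle + h\left(m\right)f\left(m\right)\left\langle h\left(A\right)g\left(A\right)x,x\right\rangle.
\end{multline*}

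The final step is purely algebraic. I would expand the right-hand side of the claimed inequality \eqref{eq2.10}, whereupon the product $\left\langle h\left(A\right)f\left(A\right)x,x\right\rangle\cdot\left\langle h\left(A\right)g\left(A\right)x,x\right\rangle$ appears on both sides and cancels; regrouping the surviving terms one checks that \eqref{eq2.10} is equivalent to the displayed specialization of \eqref{eq2.3} above (using $f\left(m\right)g\left(m\right)=g\left(m\right)f\left(m\right)$). This gives the `$\ge$' case. For the $h$-asynchronous hypothesis, \eqref{eq2.2} and hence \eqref{eq2.3} reverse, so the identical substitution and rearrangement deliver the `$\le$' case. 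I anticipate no substantial difficulty: the only genuine points are verifying $m\in\left[\gamma,\Gamma\right]$ so that the substitution is legitimate, and carefully tracking the bilinear products during the cancellation so that the leftover terms regroup into the two bracketed factors multiplying $g\left(m\right)$ and $\left\langle h\left(A\right)g\left(A\right)x,x\right\rangle$ exactly as written in \eqref{eq2.10}.
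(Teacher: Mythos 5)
Your proposal is correct and follows essentially the same route as the paper: both hinge on the observation that $\gamma\le\left\langle Ax,x\right\rangle\le\Gamma$ so that the mean value may be substituted for the free scalar parameter in the functional-calculus inequality, after which \eqref{eq2.10} is an algebraic rearrangement (the product $\left\langle h\left(A\right)f\left(A\right)x,x\right\rangle\left\langle h\left(A\right)g\left(A\right)x,x\right\rangle$ cancelling as you describe). The only cosmetic difference is that the paper rederives the needed inequality from \eqref{eq2.12} via an auxiliary operator $B$ and vector $y$ before setting $B=A$, $y=x$, whereas you reuse \eqref{eq2.3} directly with $s=\left\langle Ax,x\right\rangle$; the content is identical.
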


\begin{proof}
Since $f, g$ are synchronous and $\gamma \le  \left\langle {Ax,x}
\right\rangle \le \Gamma$  for any $x\in H$ with $\|x\|=1$, we
have
\begin{multline}
\left( {h\left( \left\langle {Ax,x} \right\rangle \right)f\left( t
\right) - h\left( t \right)f\left( \left\langle {Ax,x}
\right\rangle \right)} \right)
\\
\times\left( {h\left( \left\langle {Ax,x} \right\rangle
\right)g\left( t \right) - h\left( t \right)g\left( \left\langle
{Ax,x} \right\rangle \right)} \right) \ge 0  \label{eq2.12}
\end{multline}
for any $t \in \left[a,b\right]$ for any $x\in H$ with $\|x\|=1$.

Employing property \eqref{eq1.2} for inequality \eqref{eq2.12} we
have
\begin{multline}
\left\langle{\left[{ h\left( \left\langle {Ax,x} \right\rangle
\right)f\left( B \right) - h\left( B \right)f\left( \left\langle
{Ax,x} \right\rangle \right) }\right] }\right.
\\
\left.{\times\left[{ h\left( \left\langle {Ax,x} \right\rangle
\right)g\left( B \right) - h\left( B \right)g\left( \left\langle
{Ax,x} \right\rangle \right) }\right]y,y}\right\rangle \ge 0
\label{eq2.13}
\end{multline}
for any  bounded linear operator $B$ with $\spe\left({B}\right)
\subseteq \left[\gamma,\Gamma\right]$ and $y\in H$ with $\|y\|=1$.

Now, since
\begin{multline}
 \left\langle {\left[ {h\left( {\left\langle {Ax,x} \right\rangle } \right)f\left( B \right) - h\left( B \right)f\left( {\left\langle {Ax,x} \right\rangle } \right)} \right]}\right.\\ \times \left.{\left[ {h\left( {\left\langle {Ax,x} \right\rangle } \right)g\left( B \right) - h\left( B \right)g\left( {\left\langle {Ax,x} \right\rangle } \right)} \right]y,y} \right\rangle  \\
  = h^2 \left( {\left\langle {Ax,x} \right\rangle } \right)\left\langle {f\left( B \right)g\left( B \right)y,y} \right\rangle  - h\left( {\left\langle {Ax,x} \right\rangle } \right)f\left( {\left\langle {Ax,x} \right\rangle } \right)\left\langle {h\left( B \right)g\left( B \right)y,y} \right\rangle  \\
  - h\left( {\left\langle {Ax,x} \right\rangle } \right)g\left( {\left\langle {Ax,x} \right\rangle } \right)\left\langle {h\left( B \right)f\left( B \right)y,y} \right\rangle
  \\+ \left\langle {h^2 \left( B \right)y,y} \right\rangle f\left( {\left\langle {Ax,x} \right\rangle } \right)g\left( {\left\langle {Ax,x} \right\rangle }
  \right),\label{eq2.14}
\end{multline}
then from \eqref{eq2.13} we get
\begin{multline}
 h^2 \left( {\left\langle {Ax,x} \right\rangle } \right)\left\langle {f\left( B \right)g\left( B \right)y,y} \right\rangle  + \left\langle {h^2 \left( B \right)y,y} \right\rangle f\left( {\left\langle {Ax,x} \right\rangle } \right)g\left( {\left\langle {Ax,x} \right\rangle } \right) \\
  \ge h\left( {\left\langle {Ax,x} \right\rangle } \right)f\left( {\left\langle {Ax,x} \right\rangle } \right)\left\langle {h\left( B \right)g\left( B \right)y,y} \right\rangle  \\+ h\left( {\left\langle {Ax,x} \right\rangle } \right)g\left( {\left\langle {Ax,x} \right\rangle } \right)\left\langle {h\left( B \right)f\left( B \right)y,y}
  \right\rangle,\label{eq2.15}
\end{multline}
and this is equivalent to write
\begin{align}
&h^2 \left( {\left\langle {Ax,x} \right\rangle }
\right)\left\langle {f\left( B \right)g\left( B \right)y,y}
\right\rangle- \left\langle {h\left( A \right)f\left( A
\right)x,x} \right\rangle \cdot \left\langle {h\left( A
\right)g\left( A \right)x,x} \right\rangle
\label{eq2.16}\\
&\ge g\left( {\left\langle {Ax,x} \right\rangle }
\right)\left[{h\left( {\left\langle {Ax,x} \right\rangle } \right)
\left\langle {h\left( B \right)f\left( B \right)y,y} \right\rangle
-   \left\langle {h^2 \left( B \right)y,y} \right\rangle f\left(
{\left\langle {Ax,x} \right\rangle } \right) }\right]
\nonumber\\
&\qquad+ h\left( {\left\langle {Ax,x} \right\rangle }
\right)f\left( {\left\langle {Ax,x} \right\rangle }
\right)\left\langle {h\left( B \right)g\left( B \right)y,y}
\right\rangle - \left\langle {h\left( A \right)f\left( A
\right)x,x} \right\rangle \cdot \left\langle {h\left( A
\right)g\left( A \right)x,x} \right\rangle\nonumber
\end{align}
for each $x,y\in H$ with $\|x\|=\|y\|=1$. Setting $B=A$ and $y=x$
in \eqref{eq2.16} we get the required result in \eqref{eq2.10}.
The reverse sense follows similarly.
 \end{proof}

 \begin{remark}
Let $0 < \gamma < \Gamma$ and choose $f(s)=s$ and $g(s)=s^{-1}$,
$s>0$ in Theorem \ref{thm2.2}. So that, if $f$ and $g$ are
$h$-synchronous ($h$-asynchronous) on $\left[ {\gamma,\Gamma}
\right]$, then
\begin{multline*}
h^2 \left( {\left\langle {Ax,x} \right\rangle } \right)  -
\left\langle {Ah\left( A \right) x,x} \right\rangle \cdot
\left\langle {A^{-1}h\left( A \right) x,x} \right\rangle
\\
 \ge (\le) \left[{h\left( {\left\langle {Ax,x} \right\rangle } \right)
\left\langle {Ah\left( A \right) x,x} \right\rangle - \left\langle
{h^2 \left( A \right)x,x} \right\rangle   \left\langle {Ax,x}
\right\rangle   }\right] \cdot  \left\langle {Ax,x} \right\rangle
^{-1}
 \\
 + \left[{h\left( {\left\langle {Ax,x} \right\rangle }
\right) \left\langle {Ax,x} \right\rangle   - \left\langle
{Ah\left( A \right)x,x} \right\rangle }\right] \cdot \left\langle
{A^{-1}h\left( A \right) x,x} \right\rangle
\end{multline*}
 for any $x\in H$ with $\|x\|=1$. In special case, if $h(t)=1$ for
 all $t \in \left[ {\gamma,\Gamma}
\right]$, then $s$ and $s^{-1}$ are  asynchronous so that we have
\begin{align}
\label{conv} 1 \le \left\langle {A x,x} \right\rangle \left\langle
{A^{-1} x,x} \right\rangle
\end{align}
 \end{remark}

 \begin{corollary}
\label{cor5}Let $A$ be a selfadjoint operator with
$\spe\left(A\right)\subset \left[\gamma,\Gamma\right]$  for some
real numbers $\gamma,\Gamma$ with $\gamma<\Gamma$. Let $h: \left[
{\gamma,\Gamma} \right]\to \mathbb{R}$ be a non-negative
continuous. If $f: \left[ {\gamma,\Gamma} \right]\to \mathbb{R}$
is continuous and  $h$-synchronous on $\left[ {\gamma,\Gamma}
\right]$, then
\begin{multline}
h^2 \left( {\left\langle {Ax,x} \right\rangle }
\right)\left\langle {f^2\left( A \right)x,x} \right\rangle  -
\left\langle {h\left( A \right)f\left( A \right)x,x}
\right\rangle^2
\\
 \ge   \left[{h\left( {\left\langle {Ax,x} \right\rangle } \right)
\left\langle {h\left( A \right)f\left( A \right)x,x} \right\rangle
-   \left\langle {h^2 \left( A \right)x,x} \right\rangle f\left(
{\left\langle {Ax,x} \right\rangle } \right) }\right] \cdot
f\left( {\left\langle {Ax,x} \right\rangle } \right)
 \\
 + \left[{h\left( {\left\langle {Ax,x} \right\rangle }
\right)f\left( {\left\langle {Ax,x} \right\rangle } \right) -
\left\langle {h\left( A \right)f\left( A \right)x,x} \right\rangle
}\right] \cdot \left\langle {h\left( A \right)f\left( A
\right)x,x} \right\rangle
  \label{eq2.17}
\end{multline}
 for any $x\in H$ with $\|x\|=1$.
\end{corollary}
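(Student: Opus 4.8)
The plan is to obtain \eqref{eq2.17} as the diagonal case $g=f$ of Theorem \ref{thm2.2}, in exact parallel to how Corollary \ref{cor1} was deduced from Theorem \ref{thm2.1}. The key preliminary observation is that, by the Remark following Definition \ref{def2}, a continuous function $f$ is always $h$-synchronous with itself, regardless of its $h$-monotonicity. Hence the pair $(f,f)$ automatically satisfies the hypotheses of Theorem \ref{thm2.2}, and moreover only the lower-bound (``$\ge$'') branch of the dichotomy can occur; this is why \eqref{eq2.17} carries a single inequality sign rather than the ``$\ge(\le)$'' of the parent theorem.

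With that settled, I would simply specialize $g=f$ throughout inequality \eqref{eq2.10}. Term by term, the left-hand side becomes $h^2(\langle Ax,x\rangle)\langle f^2(A)x,x\rangle-\langle h(A)f(A)x,x\rangle^2$, since $\langle f(A)g(A)x,x\rangle\to\langle f^2(A)x,x\rangle$ and the product $\langle h(A)f(A)x,x\rangle\cdot\langle h(A)g(A)x,x\rangle$ collapses to a square. On the right-hand side, the first bracketed factor is unchanged, while its multiplier $g(\langle Ax,x\rangle)$ becomes $f(\langle Ax,x\rangle)$; similarly, in the second summand the factor $\langle h(A)g(A)x,x\rangle$ becomes $\langle h(A)f(A)x,x\rangle$. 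Collecting these substitutions reproduces \eqref{eq2.17} verbatim.

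There is no genuine analytic obstacle here: the entire content is inherited from Theorem \ref{thm2.2}, whose proof already handled the operator-order application of property \eqref{eq1.2} and the passage from the operator inequality \eqref{eq2.15} to the scalar form \eqref{eq2.16}. The only point requiring care is the clerical one of matching each term of \eqref{eq2.10} under $g=f$ to the corresponding term of \eqref{eq2.17}, and confirming that the asynchronous alternative is vacuous for $g=f$. Accordingly, the proof reduces to the single sentence that setting $g=f$ in \eqref{eq2.10} yields \eqref{eq2.17}, with the self-$h$-synchronicity of $f$ guaranteeing that the inequality holds in the stated direction.
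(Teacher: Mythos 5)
Your proposal is correct and follows exactly the paper's own route: the paper proves Corollary \ref{cor5} by the one-line observation that setting $f=g$ in \eqref{eq2.10} yields \eqref{eq2.17}. Your additional remarks---that $f$ is automatically $h$-synchronous with itself (so only the ``$\ge$'' branch of Theorem \ref{thm2.2} is relevant) and the explicit term-by-term matching---are accurate and merely make explicit what the paper leaves implicit.
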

\begin{proof}
Setting $f=g$ in \eqref{eq2.10}, respectively, we get the required
results.
\end{proof}

 \begin{corollary}
\label{cor6} Let $A$ be a selfadjoint operator with
$\spe\left(A\right)\subset \left[\gamma,\Gamma\right]$ for some
real numbers $\gamma,\Gamma$ with $0<\gamma<\Gamma$.
 If $f : \left[ {\gamma,\Gamma} \right]\to \mathbb{R}$ are
continuous and $t$-synchronous on $\left[ {\gamma,\Gamma}
\right]$, then
\begin{multline}
\left\langle {Ax,x} \right\rangle^2\left\langle {f^2\left( A
\right)x,x} \right\rangle  - \left\langle {Af\left( A \right)x,x}
\right\rangle^2
\\
 \ge   \left[{ \left\langle {Ax,x} \right\rangle
\left\langle {Af\left( A \right)x,x} \right\rangle - \left\langle
{A^2x,x} \right\rangle f\left( {\left\langle {Ax,x} \right\rangle
} \right) }\right] \cdot f\left( {\left\langle {Ax,x}
\right\rangle } \right)
 \\
 + \left[{ \left\langle {Ax,x} \right\rangle  f\left( {\left\langle {Ax,x} \right\rangle } \right) -
\left\langle {Af\left( A \right)x,x} \right\rangle }\right] \cdot
\left\langle {Af\left( A \right)x,x} \right\rangle
\end{multline}
 for any $x\in H$ with $\|x\|=1$.
 \end{corollary}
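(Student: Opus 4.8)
The plan is to recognize this as the $h(t)=t$ specialization of Corollary \ref{cor5}, which in turn is the diagonal case $f=g$ of Theorem \ref{thm2.2}. So no new analytic work is required: the entire content has already been established, and what remains is a verification that the substitution lands exactly on the claimed inequality.

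First I would check admissibility of the weight. The corollary assumes $0<\gamma<\Gamma$, so the identity function $h(t)=t$ is strictly positive and continuous on $[\gamma,\Gamma]$, hence a legitimate choice of non-negative continuous $h$ for Corollary \ref{cor5}. Moreover, the hypothesis that $f$ is $t$-synchronous is by definition precisely the statement that $f$ is $h$-synchronous with this $h$ (recall the $t^r$-synchronous notion with $r=1$ in Definition \ref{def2}). Thus every hypothesis of Corollary \ref{cor5} is met.

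Next I would carry out the substitution in \eqref{eq2.17}. Using the functional calculus, $h(A)=A$ and $h^2(A)=A^2$, so $\langle h(A)f(A)x,x\rangle=\langle Af(A)x,x\rangle$ and $\langle h^2(A)x,x\rangle=\langle A^2x,x\rangle$. At the scalar level $h(\langle Ax,x\rangle)=\langle Ax,x\rangle$ and $h^2(\langle Ax,x\rangle)=\langle Ax,x\rangle^2$. Feeding these into the left-hand side of \eqref{eq2.17} gives $\langle Ax,x\rangle^2\langle f^2(A)x,x\rangle-\langle Af(A)x,x\rangle^2$, and feeding them into the two bracketed terms on the right produces exactly the two products appearing in the statement of Corollary \ref{cor6}. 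Since \eqref{eq2.17} holds in the `$\ge$' direction for $h$-synchronous $f$, the $t$-synchronous hypothesis yields the asserted inequality.

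The only thing to be careful about is bookkeeping: one must confirm term by term that the operator arguments $h(A)f(A)$, $h^2(A)$ and the scalar arguments $h(\langle Ax,x\rangle)$, $f(\langle Ax,x\rangle)$ each convert correctly, since the weight $h$ appears in four structurally different positions. There is no genuine obstacle here—no positivity estimate, no spectral argument beyond what \eqref{eq1.2} already supplied in the proof of Theorem \ref{thm2.2}—so the proof reduces to the single line: set $h(t)=t$ in Corollary \ref{cor5}.

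\begin{proof}
Setting $h(t)=t$ in Corollary \ref{cor5} (equivalently, in \eqref{eq2.17}) we get the desired result.
\end{proof}
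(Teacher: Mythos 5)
Your proposal is correct and coincides with the paper's own proof, which likewise obtains Corollary \ref{cor6} by setting $h(t)=t$ in \eqref{eq2.17} (Corollary \ref{cor5}); your additional checks that $h(t)=t$ is admissible on $[\gamma,\Gamma]$ with $0<\gamma<\Gamma$ and that the four occurrences of $h$ convert correctly are sound. Nothing further is needed.
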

\begin{proof}
Setting $h(t)=t$ in \eqref{eq2.17}, respectively, we get the
required results.
\end{proof}

\begin{theorem}
\label{thm2.2-3} Let $A$ be a selfadjoint operator with
$\spe\left(A\right)\subset \left[\gamma,\Gamma\right]$ for some
real numbers $\gamma,\Gamma$ with $\gamma<\Gamma$. Let $h: \left[
{\gamma,\Gamma} \right]\to \mathbb{R}$ be a non-negative
continuous.
 If $f,g: \left[ {\gamma,\Gamma} \right]\to \mathbb{R}$ are
continuous and both $f$ and $g$ are $h$-synchronous
($h$-asynchronous) on $\left[ {\gamma,\Gamma} \right]$, then
\begin{align}
& h^2\left( \left\langle {Ax,x} \right\rangle \right)f\left(
\left\langle {A^{-1}x,x} \right\rangle \right)   g\left(
\left\langle {A^{-1}x,x} \right\rangle  \right) +
h^2\left(\left\langle {A^{-1}x,x} \right\rangle \right)f\left(
\left\langle {Ax,x} \right\rangle \right)   g\left( \left\langle
{Ax,x} \right\rangle \right)
\nonumber\\
&\ge (\le) h\left( \left\langle {Ax,x} \right\rangle
\right)h\left( \left\langle {A^{-1}x,x} \right\rangle \right)
\nonumber\\
&\qquad\times\left[f\left( \left\langle {A^{-1}x,x} \right\rangle
\right) g\left( \left\langle {Ax,x} \right\rangle \right) +
f\left( \left\langle {Ax,x} \right\rangle \right)  g\left(
\left\langle {A^{-1}x,x} \right\rangle \right)\right]
\label{eq2.10-3}
\end{align}
for any $x\in H$ with $\|x\|=1$.
\end{theorem}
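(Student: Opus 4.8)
The plan is to notice that, unlike Theorems \ref{thm2.1} and \ref{thm2.2}, inequality \eqref{eq2.10-3} is \emph{purely scalar}: every summand is a product of the functions $f,g,h$ evaluated at the two fixed real numbers $\left\langle Ax,x\right\rangle$ and $\left\langle A^{-1}x,x\right\rangle$, and no operator-valued quantity such as $\left\langle f(A)g(A)x,x\right\rangle$ occurs. Consequently the operator-order passage \eqref{eq1.2} that drove the earlier proofs is not needed here; the whole statement should drop out of a single application of the defining relation \eqref{h-syn} of $h$-synchronicity at two cleverly chosen points.

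First I would abbreviate $\mu:=\left\langle Ax,x\right\rangle$ and $\nu:=\left\langle A^{-1}x,x\right\rangle$ (here $0<\gamma<\Gamma$ is needed so that $A^{-1}$ exists and is positive). Then I would substitute $x\mapsto\mu$ and $y\mapsto\nu$ directly into \eqref{h-syn}, which gives
\begin{align*}
\left( h\left(\nu\right)f\left(\mu\right)-h\left(\mu\right)f\left(\nu\right)\right)\left( h\left(\nu\right)g\left(\mu\right)-h\left(\mu\right)g\left(\nu\right)\right)\ge (\le)\,\,0
\end{align*}
in the $h$-synchronous ($h$-asynchronous) case. Next I would expand this product into its four terms $h^{2}\left(\nu\right)f\left(\mu\right)g\left(\mu\right)$, $h^{2}\left(\mu\right)f\left(\nu\right)g\left(\nu\right)$, $-h\left(\mu\right)h\left(\nu\right)f\left(\mu\right)g\left(\nu\right)$ and $-h\left(\mu\right)h\left(\nu\right)f\left(\nu\right)g\left(\mu\right)$, move the two mixed terms to the other side, and re-insert $\mu=\left\langle Ax,x\right\rangle$, $\nu=\left\langle A^{-1}x,x\right\rangle$. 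This reproduces \eqref{eq2.10-3} verbatim, with the inequality reversed in the asynchronous case; so the computational part is a one-line rearrangement rather than the two-fold functional-calculus argument of the preceding results.

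The one genuine point demanding care — the \emph{main obstacle} — is the domain check before \eqref{h-syn} can be invoked at the pair $(\mu,\nu)$. The bound $\gamma\le\mu\le\Gamma$ is immediate from $\gamma\,1_H\le A\le\Gamma\,1_H$, but $A^{-1}$ has spectrum in $\left[1/\Gamma,1/\gamma\right]$, so only $\nu\in\left[1/\Gamma,1/\gamma\right]$ is guaranteed. One must therefore ensure that $f,g,h$ are $h$-synchronous on an interval containing both $\mu$ and $\nu$ (equivalently, on $\left[\min\{\gamma,1/\Gamma\},\max\{\Gamma,1/\gamma\}\right]$) before the defining inequality applies at these two points. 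Once this bookkeeping is settled, the expansion above closes the argument, and the reverse ($h$-asynchronous) case follows by reversing the sign of the product throughout.
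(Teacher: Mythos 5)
Your proof is correct and is essentially the paper's own argument: the paper likewise applies the $h$-synchronicity inequality \eqref{h-syn} at the scalar pair $\left(\left\langle Ax,x\right\rangle,\left\langle By,y\right\rangle\right)$, expands the product, and then sets $B=A^{-1}$, $y=x$. You are right that the statement is purely scalar --- the paper's nominal appeal to property \eqref{eq1.2} is vacuous here, since once the two real numbers are substituted no operator-valued expression remains for the functional calculus to act on, so your direct substitution $\mu=\left\langle Ax,x\right\rangle$, $\nu=\left\langle A^{-1}x,x\right\rangle$ is the honest version of the same one-line computation. Your domain caveat is also well taken and in fact exposes a gap in the paper's own formulation: the theorem assumes only $\gamma<\Gamma$ (so $A^{-1}$ need not even exist) and $h$-synchronicity only on $\left[\gamma,\Gamma\right]$, whereas $\left\langle A^{-1}x,x\right\rangle$ lies in $\left[1/\Gamma,1/\gamma\right]$; one must add $0<\gamma$ and require \eqref{h-syn} on an interval containing $\left[\min\{\gamma,1/\Gamma\},\max\{\Gamma,1/\gamma\}\right]$, exactly as you say.
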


\begin{proof}
Since $f, g$ are synchronous and $\gamma \le  \left\langle {Ax,x}
\right\rangle \le \Gamma$, $\gamma \le  \left\langle {By,y}
\right\rangle \le \Gamma$ for any $x,y\in H$ with $\|x\|=\|y\|=1$,
we have
\begin{multline}
\left( {h\left( \left\langle {Ax,x} \right\rangle \right)f\left(
\left\langle {By,y} \right\rangle \right) - h\left(\left\langle
{By,y} \right\rangle \right)f\left( \left\langle {Ax,x}
\right\rangle \right)} \right)
\\
\times\left( {h\left( \left\langle {Ax,x} \right\rangle
\right)g\left( \left\langle {By,y} \right\rangle  \right) -
h\left( \left\langle {By,y} \right\rangle  \right)g\left(
\left\langle {Ax,x} \right\rangle \right)} \right) \ge 0
\label{eq2.12-3}
\end{multline}
for any $t \in \left[a,b\right]$ for any $x\in H$ with $\|x\|=1$.

Employing property \eqref{eq1.2} for inequality \eqref{eq2.12-3}
we have
\begin{multline}
 h^2\left( \left\langle {Ax,x} \right\rangle \right)f\left(
\left\langle {By,y} \right\rangle \right)   g\left( \left\langle
{By,y} \right\rangle  \right)
\\
+  h^2\left(\left\langle {By,y} \right\rangle \right)f\left(
\left\langle {Ax,x} \right\rangle \right)  g\left( \left\langle
{Ax,x} \right\rangle \right)
\\
- h\left( \left\langle {Ax,x} \right\rangle \right)h\left(
\left\langle {By,y} \right\rangle \right)f\left( \left\langle
{By,y} \right\rangle \right)  g\left( \left\langle {Ax,x}
\right\rangle \right)
\\
- h\left(\left\langle {By,y} \right\rangle \right) h\left(
\left\langle {Ax,x} \right\rangle \right)f\left( \left\langle
{Ax,x} \right\rangle \right)  g\left( \left\langle {By,y}
\right\rangle  \right) \ge 0 \label{eq2.13-3}
\end{multline}
for any  bounded linear operator $B$ with $\spe\left({B}\right)
\subseteq \left[\gamma,\Gamma\right]$ and $y\in H$ with $\|y\|=1$.

Now, since
\begin{multline}
 h^2\left( \left\langle {Ax,x} \right\rangle \right)f\left(
\left\langle {By,y} \right\rangle \right) \cdot  g\left(
\left\langle {By,y} \right\rangle  \right) + h^2\left(\left\langle
{By,y} \right\rangle \right)f\left( \left\langle {Ax,x}
\right\rangle \right)\cdot  g\left( \left\langle {Ax,x}
\right\rangle \right)
\\
\ge h\left( \left\langle {Ax,x} \right\rangle \right)h\left(
\left\langle {By,y} \right\rangle \right) \left[f\left(
\left\langle {By,y} \right\rangle \right)  g\left( \left\langle
{Ax,x} \right\rangle \right) + f\left( \left\langle {Ax,x}
\right\rangle \right)  g\left( \left\langle {By,y} \right\rangle
\right)\right] \label{eq2.16-3}
\end{multline}
for each $x,y\in H$ with $\|x\|=\|y\|=1$. Setting $B=A^{-1}$ and
$y=x$ in \eqref{eq2.16-3} we get the required result in
\eqref{eq2.10-3}. The reverse sense follows similarly.
 \end{proof}

\begin{remark}
Let $0 < \gamma < \Gamma$ and choose $f(s)=s$ and $g(s)=s^{-1}$,
$s>0$ in Theorem \ref{thm2.2-3}. So that, if $f$ and $g$ are
$h$-synchronous ($h$-asynchronous) on $\left[ {\gamma,\Gamma}
\right]$, then
\begin{multline*}
 h^2\left( \left\langle {Ax,x} \right\rangle \right)
  + h^2\left(\left\langle {A^{-1}x,x} \right\rangle
\right)
\\
\ge (\le)2 h\left( \left\langle {Ax,x} \right\rangle
\right)h\left( \left\langle {A^{-1}x,x} \right\rangle \right)
 \left[
\left\langle {A^{-1}x,x} \right\rangle    \left\langle {Ax,x}
\right\rangle^{-1} +   \left\langle {Ax,x} \right\rangle
\left\langle {A^{-1}x,x} \right\rangle^{-1}
 \right]
\end{multline*}

 for any $x\in H$ with $\|x\|=1$.
 \end{remark}

 \begin{corollary}
\label{cor5-3}Let $A$ be a selfadjoint operator with
$\spe\left(A\right)\subset \left[\gamma,\Gamma\right]$  for some
real numbers $\gamma,\Gamma$ with $\gamma<\Gamma$. Let $h: \left[
{\gamma,\Gamma} \right]\to \mathbb{R}$ be a non-negative
continuous. If $f: \left[ {\gamma,\Gamma} \right]\to \mathbb{R}$
is continuous and  $h$-synchronous on $\left[ {\gamma,\Gamma}
\right]$, then
\begin{multline}
 h^2\left( \left\langle {Ax,x} \right\rangle \right)f^2\left(
\left\langle {A^{-1}x,x} \right\rangle \right)    +
h^2\left(\left\langle {A^{-1}x,x} \right\rangle \right)f^2\left(
\left\langle {Ax,x} \right\rangle \right)
\\
\ge  2 h\left( \left\langle {Ax,x} \right\rangle \right)h\left(
\left\langle {A^{-1}x,x} \right\rangle \right) f\left(
\left\langle {A^{-1}x,x} \right\rangle \right)  f\left(
\left\langle {Ax,x} \right\rangle \right)
  \label{eq2.17-3}
\end{multline}
 for any $x\in H$ with $\|x\|=1$.
\end{corollary}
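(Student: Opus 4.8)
The plan is to obtain this corollary as an immediate specialization of Theorem \ref{thm2.2-3}, in exactly the same spirit as Corollaries \ref{cor1} and \ref{cor5} are derived from their parent results. Since inequality \eqref{eq2.10-3} is valid for \emph{any} pair of continuous functions $f,g$ that are both $h$-synchronous on $[\gamma,\Gamma]$, I would simply put $g=f$ and read off the resulting identity.

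Carrying this out, when $g=f$ the two summands on the left-hand side of \eqref{eq2.10-3} become $h^2(\langle Ax,x\rangle)f^2(\langle A^{-1}x,x\rangle)$ and $h^2(\langle A^{-1}x,x\rangle)f^2(\langle Ax,x\rangle)$, which are precisely the two terms forming the left-hand side of \eqref{eq2.17-3}. On the right-hand side, the bracketed expression $f(\langle A^{-1}x,x\rangle)g(\langle Ax,x\rangle)+f(\langle Ax,x\rangle)g(\langle A^{-1}x,x\rangle)$ collapses under $g=f$ to the single product $2\,f(\langle A^{-1}x,x\rangle)f(\langle Ax,x\rangle)$, so the factor of $2$ appearing in \eqref{eq2.17-3} arises automatically from the symmetry of the two now-identical products. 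This reproduces \eqref{eq2.17-3} verbatim.

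One point worth recording is why only the forward ``$\ge$'' inequality occurs, with no reverse case: as noted in the Remark following Definition \ref{def2}, a function is always $h$-synchronous with itself, so taking $g=f$ places us squarely in the $h$-synchronous hypothesis of Theorem \ref{thm2.2-3}, and the ``$\le$'' alternative simply does not arise. I do not anticipate any genuine obstacle here; the entire argument is a single substitution, and the only thing to verify is the elementary algebraic simplification of the two sides, which is routine.
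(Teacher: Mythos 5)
Your proposal is correct and is exactly the paper's own proof: the paper derives Corollary \ref{cor5-3} by setting $f=g$ in \eqref{eq2.10-3}, and your algebraic verification (including the factor of $2$ and the observation that a function is always $h$-synchronous with itself) fills in the routine details the paper leaves implicit.
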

\begin{proof}
Setting $f=g$ in \eqref{eq2.10-3}, respectively; we get the
required results.
\end{proof}

An $n$-operators version of Theorem \ref{thm2.1} is embodied as
follows:
 \begin{theorem}
\label{thm2.3}Let $A_j$ be a selfadjoint operator with
$\spe\left(A_j\right)\subset \left[\gamma,\Gamma\right]$  for
$j\in \{1,2,\cdots,n\}$  for some real numbers $\gamma,\Gamma$
with $\gamma<\Gamma$. Let $h: \left[ {\gamma,\Gamma} \right]\to
\mathbb{R}$ be a non-negative continuous. If $f,g: \left[
{\gamma,\Gamma} \right]\to \mathbb{R}$ are continuous and  both
 $h$-synchronous ($h$-asynchronous) on $\left[ {\gamma,\Gamma}
\right]$, then
\begin{multline}
\label{eq2.19}  \sum\limits_{j = 1}^n { \left\langle {h^2 \left(
A_j \right)x_j,x_j} \right\rangle} \sum\limits_{j = 1}^n
{\left\langle {f\left( A_j \right)g\left( A_j \right)x_j,x_j}
\right\rangle} \\\ge (\le) \sum\limits_{j = 1}^n {\left\langle
{h\left( A_j \right)g\left( A_j \right)x_j,x_j} \right\rangle}
\sum\limits_{j = 1}^n {\left\langle {h\left( A_j \right)f\left(
A_j \right)x_j,x_j} \right\rangle}
\end{multline}
for each $x_j\in H$, $j\in \{1,2,\cdots,n\}$ with $\sum\limits_{j
= 1}^n{\|x_j\|^2}=1$.
 \end{theorem}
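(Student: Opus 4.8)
The plan is to imitate the two-step functional-calculus argument used in the proof of Theorem \ref{thm2.1}, replacing every single inner product by a sum over the index $j$. The starting point is the scalar consequence of $h$-synchronicity recorded in \eqref{eq2.2}, namely
\[
h^2(s)f(t)g(t) + h^2(t)f(s)g(s) \ge h(s)h(t)f(t)g(s) + h(s)h(t)g(t)f(s),
\]
which holds for all $s,t \in [\gamma,\Gamma]$.

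First I would fix $s$ and read both sides as functions of $t$. Applying property \eqref{eq1.2} with $A=A_j$, pairing against $x_j$, and summing over $j \in \{1,\dots,n\}$ produces
\[
h^2(s)\,P + f(s)g(s)\,Q \ge h(s)g(s)\,R + h(s)f(s)\,S,
\]
where I abbreviate $P=\sum_{j}\langle f(A_j)g(A_j)x_j,x_j\rangle$, $Q=\sum_{j}\langle h^2(A_j)x_j,x_j\rangle$, $R=\sum_{j}\langle h(A_j)f(A_j)x_j,x_j\rangle$ and $S=\sum_{j}\langle h(A_j)g(A_j)x_j,x_j\rangle$. Since the scalar factors $h^2(s)$, $f(s)g(s)$, $h(s)g(s)$, $h(s)f(s)$ pull out of each inner product, this step is routine and requires no normalization.

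Next I would treat the displayed inequality as a scalar inequality in the remaining variable $s$, with the four fixed real numbers $P,Q,R,S$ as coefficients, and apply property \eqref{eq1.2} a second time, now with $A=A_k$, pairing against $x_k$ and summing over $k$. Because $\sum_{k}\langle h^2(A_k)x_k,x_k\rangle = Q$, $\sum_{k}\langle f(A_k)g(A_k)x_k,x_k\rangle = P$, $\sum_{k}\langle h(A_k)g(A_k)x_k,x_k\rangle = S$ and $\sum_{k}\langle h(A_k)f(A_k)x_k,x_k\rangle = R$, each new summation reproduces one of the four quantities already named, and the inequality collapses to
\[
PQ + QP \ge RS + SR,
\]
that is $2PQ \ge 2RS$, which after cancelling the factor $2$ is precisely \eqref{eq2.19}. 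The $h$-asynchronous case reverses each inequality throughout, since \eqref{eq2.2} then reverses and the direction is preserved under both summations.

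The only delicate point is the bookkeeping in the second step: one must recognise that the coefficients $P,Q,R,S$ created by the first summation are exactly the sums that resurface when the second summation is performed, so that both sides double and the common factor cancels cleanly. I would also note that both sides of \eqref{eq2.19} are homogeneous of the same degree in the vectors $x_j$; hence the constraint $\sum_{j}\|x_j\|^2=1$ is inessential to the truth of the inequality and is imposed only so that the statement reduces to Theorem \ref{thm2.1} when $n=1$.
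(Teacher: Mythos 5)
Your proof is correct, but it takes a genuinely different route from the paper's. The paper proves Theorem \ref{thm2.3} by a reduction: it forms the block-diagonal operator $\widetilde{A}=\operatorname{diag}(A_1,\dots,A_n)$ acting on the direct sum of $n$ copies of $H$, together with $\widetilde{x}=(x_1,\dots,x_n)$, observes that $\sum_j\|x_j\|^2=1$ makes $\widetilde{x}$ a unit vector and that the functional calculus of $\widetilde{A}$ turns each inner product into the corresponding sum, and then simply invokes Theorem \ref{thm2.1}. You instead re-run the two-step argument of Theorem \ref{thm2.1} from scratch, replacing each pairing by a sum over $j$: starting from \eqref{eq2.2}, the first application of \eqref{eq1.2} (with $A=A_j$, paired against $x_j$ and summed) yields $h^2(s)P+f(s)g(s)Q\ge h(s)g(s)R+h(s)f(s)S$, and the second application (with $A=A_k$, summed over $k$) collapses to $2PQ\ge 2RS$, which is \eqref{eq2.19}; the bookkeeping you describe is exactly right, and the asynchronous case reverses throughout. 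What each approach buys: the paper's reduction is shorter once Theorem \ref{thm2.1} is in hand and explains why the hypothesis $\sum_j\|x_j\|^2=1$ appears (it is the unit-norm condition for $\widetilde{x}$), whereas your direct argument avoids introducing the auxiliary space and makes transparent that the normalization is never actually used, so \eqref{eq2.19} holds for arbitrary $x_j\in H$ --- a point also visible from the paper's proof via the degree-$4$ homogeneity you mention, but more immediate in yours. Both are sound; yours is essentially the block-diagonal reduction unfolded.
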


\begin{proof}
As in (\cite{TF}, p.6), if we put
\begin{align*}
\widetilde{A}:=\left( {\begin{array}{*{20}c}
   {A_1 } &  \cdots  & 0  \\
    \vdots  &  \ddots  &  \vdots   \\
   0 &  \cdots  & {A_n }  \\
\end{array}} \right)
\end{align*}
and
\begin{align*}
\widetilde{x}:=\left( {\begin{array}{*{20}c}
   {x_1 }  \\
    \vdots   \\
   {x_n }  \\
\end{array}} \right)
\end{align*}
then we have $\spe\left(\widetilde{A}\right)\subset
\left[\gamma,\Gamma\right]$, $\|\widetilde{x}\|=1$, $\left\langle
{h^2 \left(\widetilde{ A} \right)\widetilde{x},\widetilde{x}}
\right\rangle= \sum\limits_{j = 1}^n { \left\langle {h^2 \left(
A_j \right)x_j,x_j} \right\rangle}$, $\left\langle {f\left(
\widetilde{A}\right)g\left(\widetilde{ A}
\right)\widetilde{x},\widetilde{x}} \right\rangle=\sum\limits_{j =
1}^n {\left\langle {f\left( A_j \right)g\left( A_j \right)x_j,x_j}
\right\rangle}$, \\

$\left\langle {h\left( \widetilde{A} \right)g\left( \widetilde{A}
\right)x,x} \right\rangle=\sum\limits_{j = 1}^n {\left\langle
{h\left( A_j \right)g\left( A_j \right)x_j,x_j} \right\rangle}$,

and $\left\langle {h\left( \widetilde{A} \right)f\left(
\widetilde{A} \right)\widetilde{x},\widetilde{x}}
\right\rangle=\sum\limits_{j = 1}^n {\left\langle {h\left( A_j
\right)f\left( A_j \right)x_j,x_j} \right\rangle}$. Applying
Theorem \ref{thm2.1} for $\widetilde{A}$ and $\widetilde{x}$ we
deduce the desired result.
\end{proof}

\begin{corollary}
\label{eq2.20}Let $A_j$ be a selfadjoint operator with
$\spe\left(A_j\right)\subset \left[\gamma,\Gamma\right]$  for
$j\in \{1,2,\cdots,n\}$   for some real numbers $\gamma,\Gamma$
with $\gamma<\Gamma$. Let $h: \left[ {\gamma,\Gamma} \right]\to
\mathbb{R}$ be a non-negative continuous. If $f: \left[
{\gamma,\Gamma} \right]\to \mathbb{R}$ is continuous and
 $h$-synchronous  on $\left[ {\gamma,\Gamma}
\right]$, then
\begin{align}
\left({\sum\limits_{j = 1}^n {\left\langle {h\left( A_j
\right)f\left( A_j \right)x_j,x_j} \right\rangle} }\right)^2 \le
\sum\limits_{j = 1}^n { \left\langle {h^2 \left( A_j
\right)x_j,x_j} \right\rangle} \sum\limits_{j = 1}^n {\left\langle
{f^2\left( A_j \right)x_j,x_j} \right\rangle} \label{eq2.20}
\end{align}
for each $x_j\in H$, $j\in \{1,2,\cdots,n\}$ with $\sum\limits_{j
= 1}^n{\|x_j\|^2}=1$.
 \end{corollary}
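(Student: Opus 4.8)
The plan is to read off \eqref{eq2.20} as the diagonal specialization $g=f$ of Theorem \ref{thm2.3}, in exactly the way Corollary \ref{cor1} was deduced from Theorem \ref{thm2.1}. The first point to settle is that the hypotheses of Theorem \ref{thm2.3} are met under this substitution: by the Remark following Definition \ref{def2}, a function is always $h$-synchronous with itself, regardless of its $h$-monotonicity, so the pair $(f,f)$ is admissible and we are entitled to invoke the ``$\ge$'' instance of \eqref{eq2.19}. Thus no monotonicity assumption beyond continuity of $f$ and $h$ is needed.

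With $g=f$, I would substitute into \eqref{eq2.19} and track the two sides. On the left, the factor $\sum_{j=1}^n \langle f(A_j)g(A_j)x_j,x_j\rangle$ becomes $\sum_{j=1}^n \langle f^2(A_j)x_j,x_j\rangle$ by the functional calculus, so the left-hand side of \eqref{eq2.19} is precisely the right-hand side of \eqref{eq2.20}. On the right, the two factors coincide, collapsing to $\bigl(\sum_{j=1}^n \langle h(A_j)f(A_j)x_j,x_j\rangle\bigr)^2$, which is the left-hand side of \eqref{eq2.20}. Rewriting the resulting inequality with its sides exchanged (i.e.\ as a ``$\le$'') then yields \eqref{eq2.20} verbatim.

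There is essentially no obstacle here; the only thing to check is the elementary identification $f(A_j)g(A_j)=f^2(A_j)$ and $h(A_j)f(A_j)=h(A_j)g(A_j)$ under $g=f$, which is immediate. As an alternative that avoids quoting Theorem \ref{thm2.3}, one could argue directly in the block-diagonal model: form $\widetilde A=\mathrm{diag}(A_1,\dots,A_n)$ and the stacked vector $\widetilde x=(x_1,\dots,x_n)^{\mathsf T}$, for which $\|\widetilde x\|=1$ and $\spe(\widetilde A)\subset[\gamma,\Gamma]$, and apply Corollary \ref{cor1} to $\widetilde A$ and $\widetilde x$; the identities $\langle h^2(\widetilde A)\widetilde x,\widetilde x\rangle=\sum_j\langle h^2(A_j)x_j,x_j\rangle$ together with their analogues for $f^2(\widetilde A)$ and $h(\widetilde A)f(\widetilde A)$ reproduce \eqref{eq2.20} at once. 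Either route is routine once the self-synchronicity of $f$ is noted, so the substantive content already resides in the earlier results.
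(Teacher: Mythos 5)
Your proposal is correct and follows the paper's own route: the paper proves \eqref{eq2.20} exactly by setting $f=g$ in \eqref{eq2.19} of Theorem \ref{thm2.3}, and your observation that $(f,f)$ is automatically $h$-synchronous justifies the hypothesis. Your alternative via the block-diagonal operator $\widetilde{A}$ and Corollary \ref{cor1} is also sound, but it merely retraces the paper's proof of Theorem \ref{thm2.3} itself, so it adds nothing essentially new.
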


\begin{proof}
Setting $f=g$ in \eqref{eq2.19}, we get the desired result.
\end{proof}

\begin{corollary}
Let $A_j$ be a selfadjoint operator with
$\spe\left(A_j\right)\subset \left[\gamma,\Gamma\right]$  for
$j\in \{1,2,\cdots,n\}$  for some real numbers $\gamma,\Gamma$
with $0<\gamma<\Gamma$.  If $f: \left[ {\gamma,\Gamma} \right]\to
\mathbb{R}$ is continuous and
 $t$-synchronous   on $\left[ {\gamma,\Gamma}
\right]$, then
\begin{align}
\left({\sum\limits_{j = 1}^n {\left\langle {  A_j  f\left( A_j
\right)x_j,x_j} \right\rangle} }\right)^2 \le  \sum\limits_{j =
1}^n { \left\langle {A^2_j x_j,x_j} \right\rangle} \sum\limits_{j
= 1}^n {\left\langle {f^2\left( A_j \right)x_j,x_j} \right\rangle}
\label{eq2.21}
\end{align}
for each $x_j\in H$, $j\in \{1,2,\cdots,n\}$ with $\sum\limits_{j
= 1}^n{\|x_j\|^2}=1$.
 \end{corollary}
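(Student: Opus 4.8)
The plan is to obtain \eqref{eq2.21} as the special case $h(t)=t$ of the $n$-operator inequality \eqref{eq2.20} established in the previous corollary. First I would check that this choice of $h$ is legitimate under the stated hypotheses: since we now assume $0<\gamma<\Gamma$, the function $h(t)=t$ is continuous and strictly positive—hence non-negative—on $\left[\gamma,\Gamma\right]$, so it meets exactly the requirement imposed on $h$ in \eqref{eq2.20}. Moreover, the assumption that $f$ be $t$-synchronous is, by Definition \ref{def2}, precisely the statement that $f$ is $h$-synchronous for this particular $h$; in fact, since $f$ is here paired with itself, the Remark following Definition \ref{def2} shows that the required synchronicity holds automatically, so no genuine monotonicity condition on $f$ is needed.

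Next I would evaluate the continuous functional calculus at $h(t)=t$. By property (4) of the map $\mathcal{G}$ one has $h\left(A_j\right)=A_j$, and by the multiplicativity in property (2) one has $h^2\left(A_j\right)=A_j^2$, for each $j\in\{1,\dots,n\}$. Substituting these into \eqref{eq2.20} converts $\left\langle h\left(A_j\right)f\left(A_j\right)x_j,x_j\right\rangle$ into $\left\langle A_j f\left(A_j\right)x_j,x_j\right\rangle$ and $\left\langle h^2\left(A_j\right)x_j,x_j\right\rangle$ into $\left\langle A_j^2 x_j,x_j\right\rangle$, while the factor $\left\langle f^2\left(A_j\right)x_j,x_j\right\rangle$ is left unchanged. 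The normalization constraint $\sum_{j=1}^n\|x_j\|^2=1$ transfers verbatim, and reading off the resulting inequality yields exactly \eqref{eq2.21}.

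Because every ingredient is a direct transcription, I do not expect a genuinely hard step; the only point that needs care is confirming the admissibility of $h(t)=t$, which is precisely why the ambient hypothesis is strengthened here from $\gamma<\Gamma$ to $0<\gamma<\Gamma$. Thus the entire argument collapses to the one-line substitution ``set $h(t)=t$ in \eqref{eq2.20}'', exactly mirroring the pattern already used for Corollaries \ref{cor2} and \ref{cor6}.
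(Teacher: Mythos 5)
Your proposal is correct and follows exactly the paper's own route: the paper proves this corollary by the one-line substitution $h(t)=t$ in \eqref{eq2.20}. Your additional checks (positivity of $h(t)=t$ on $\left[\gamma,\Gamma\right]$ thanks to $0<\gamma<\Gamma$, and the automatic self-synchronicity of $f$) are sound elaborations of the same argument.
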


\begin{proof}
Setting $h(t)=t$ in \eqref{eq2.20}, we get the desired result.
\end{proof}

\begin{remark}
Let $A_j$ be a selfadjoint operator with
$\spe\left(A_j\right)\subset \left[\gamma,\Gamma\right]$  for
$j\in \{1,2,\cdots,n\}$  for some real numbers $\gamma,\Gamma$
with $0<\gamma<\Gamma$.
 Let  $f\left( {s} \right)=s^{1/2}$  for $s\in \left[\gamma,\Gamma\right]$
 then $f$ is $t^{-1/2}$-synchronous so that by \eqref{eq2.21} we have
 \begin{align}
 n^2 \le
\left(\sum\limits_{j = 1}^n { \left\langle {A_jx_j,x_j}
\right\rangle}\right)\cdot \left(\sum\limits_{j = 1}^n
{\left\langle {A^{-1}_jx_j,x_j} \right\rangle}\right).
\label{eq.Cheb}
\end{align}
The discrete version of Chebyshev inequality, reads that
\begin{align*}
\frac{1}{m}\sum\limits_{i = 1}^m {a_i b_i }  \ge \left(
{\frac{1}{m}\sum\limits_{i = 1}^m {a_i } } \right)\left(
{\frac{1}{m}\sum\limits_{i = 1}^m {b_i } } \right)
\end{align*}
for all similarly ordered $n$-tuples $\left(a_1,\cdots,a_m\right)$
and $\left(b_1,\cdots,b_m\right)$.

Let $\{A_j\}_{j=1}^n$ be a finite positive sequence of invertible
self-adjoint operators and consider $a_j=\left\langle {A_jx_j,x_j}
\right\rangle$ and $b_j=\left\langle {A^{-1}_jx_j,x_j}
\right\rangle$ for all $j=1,\cdots,n$. If
$\left(a_1,\cdots,a_n\right)$ and $\left(b_1,\cdots,b_n\right)$
similarly ordered $n$-tuples. Then by employing the Chebyshev
inequality on \eqref{eq.Cheb} we get
 \begin{align}
1 \le \frac{1}{n} \sum\limits_{j = 1}^n { \left\langle {A
_jx_j,x_j} \right\rangle} \cdot \frac{1}{n}\sum\limits_{j = 1}^n
{\left\langle {A^{-1}_jx_j,x_j} \right\rangle} \le
\frac{1}{n}\sum\limits_{j = 1}^n {\left\langle {A_jx_j,x_j}
\right\rangle\left\langle {A^{-1}_jx_j,x_j}
\right\rangle}.\label{eq.cheb.Kant}
\end{align}
On other hand, if $\gamma_j \cdot 1_H\le A_j\le \Gamma_j \cdot
1_H$, then by Kanotrovich inequality we have
 \begin{align}
1 &\le \frac{1}{n} \sum\limits_{j = 1}^n { \left\langle {A
_jx_j,x_j} \right\rangle} \cdot \frac{1}{n}\sum\limits_{j = 1}^n
{\left\langle {A^{-1}_jx_j,x_j} \right\rangle}
\nonumber\\
 &\le \frac{1}{n}\sum\limits_{j = 1}^n {\left\langle {A_jx_j,x_j}
\right\rangle\left\langle {A^{-1}_jx_j,x_j} \right\rangle}  \le
\frac{1}{n}\sum\limits_{j = 1}^n {\frac{{\left( {\Gamma _j -
\gamma _j } \right)^2 }}{{4\gamma _j \Gamma _j }}}.
\label{refin.kant}
\end{align}
In case $n=2$, we have
 \begin{align*}
1 &\le \frac{1}{4} \left[{\left\langle {A_1x_1,x_1}
\right\rangle+\left\langle {A_2x_2,x_2} \right\rangle}\right]
\cdot  \left[{\left\langle {A^{-1}_1x_1,x_1}
\right\rangle+\left\langle {A^{-1}_2x_2,x_2} \right\rangle}\right]
\\
 &\le \frac{1}{2}\left[{\left\langle {A_1x_1,x_1}
\right\rangle\left\langle {A^{-1}_1x_1,x_1}
\right\rangle+\left\langle {A_2x_2,x_2} \right\rangle\left\langle
{A^{-1}_2x_2,x_2} \right\rangle}\right]
\\
&\le \frac{1}{8}\left[{\frac{{\left( {\Gamma _1 - \gamma _1 }
\right)^2 }}{{ \gamma _1 \Gamma _1 }}+\frac{{\left( {\Gamma _2 -
\gamma _2 } \right)^2 }}{{ \gamma _2 \Gamma _2 }}}\right].
\end{align*}

\end{remark}

An $n$-operators version of Theorem \ref{thm2.2} is incorporated
in the following result.
\begin{theorem}
\label{thm2.4}Let $A_j$ be a selfadjoint operator with
$\spe\left(A_j\right)\subset \left[\gamma,\Gamma\right]$  for
$j\in \{1,2,\cdots,n\}$  for some real numbers $\gamma,\Gamma$
with $\gamma<\Gamma$.  Let $h: \left[ {\gamma,\Gamma} \right]\to
\mathbb{R}$ be a non-negative continuous.
 If $f,g: \left[ {\gamma,\Gamma} \right]\to \mathbb{R}$ are
continuous and both $f$ and $g$ are $h$-synchronous
($h$-asynchronous ) on $\left[ {\gamma,\Gamma} \right]$, then
\begin{align}
& h^2 \left( {\sum\limits_{j = 1}^n{\left\langle {A_jx_j,x_j}
\right\rangle }} \right)
 \sum\limits_{j = 1}^n {\left\langle {f\left( A_j
\right)g\left( A_j \right)x_j,x_j} \right\rangle}
\nonumber\\
&\qquad- \sum\limits_{j = 1}^n {\left\langle {h\left( A_j
\right)f\left( A_j \right)x_j,x_j} \right\rangle}   \sum\limits_{j
= 1}^n {\left\langle {h\left( A_j \right)g\left( A_j
\right)x_j,x_j} \right\rangle}
\nonumber\\
& \ge (\le) \left[{h\left( {\sum\limits_{j = 1}^n {\left\langle
{A_jx_j,x_j} \right\rangle} } \right) \sum\limits_{j = 1}^n
{\left\langle {h\left( A_j \right)f\left( A_j \right)x_j,x_j}
\right\rangle} }\right.\label{eq2.22}
\\
&\qquad \qquad\left.{-  \sum\limits_{j = 1}^n { \left\langle {h^2
\left( A_j \right)x_j,x_j} \right\rangle} f\left( {\sum\limits_{j
= 1}^n {\left\langle {A_jx_j,x_j} \right\rangle} } \right)
}\right] \cdot g\left( {\sum\limits_{j= 1}^n {\left\langle
{A_jx_j,x_j} \right\rangle }} \right)
 \nonumber\\
&\qquad + \left[{h\left( {\sum\limits_{j = 1}^n {\left\langle
{A_jx_j,x_j} \right\rangle }}\right)f\left( {\sum\limits_{j = 1}^n
{\left\langle {A_jx_j,x_j} \right\rangle} } \right) }\right.
\nonumber\\
&\qquad\qquad\left.{- \sum\limits_{j = 1}^n {\left\langle {h\left(
A_j \right)f\left( A_j \right)x_j,x_j} \right\rangle} }\right]
\cdot \sum\limits_{j = 1}^n {\left\langle {h\left( A_j
\right)g\left( A_j \right)x_j,x_j} \right\rangle }\nonumber
\end{align}
for each $x_j\in H$, $j\in \{1,2,\cdots,n\}$ with $\sum\limits_{j
= 1}^n{\|x_j\|^2}=1$.

\end{theorem}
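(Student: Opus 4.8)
The plan is to reduce Theorem \ref{thm2.4} to the single-operator inequality \eqref{eq2.10} of Theorem \ref{thm2.2} by the very same block-diagonal device already used to establish Theorem \ref{thm2.3}. First I would form the orthogonal direct-sum operator
\[
\widetilde{A} := \mathrm{diag}\left(A_1,\ldots,A_n\right)
\]
acting on $H\oplus\cdots\oplus H$, together with the stacked vector $\widetilde{x} := \left(x_1,\ldots,x_n\right)^{T}$. Since each $A_j$ is selfadjoint with $\spe\left(A_j\right)\subset\left[\gamma,\Gamma\right]$, the direct sum $\widetilde{A}$ is selfadjoint with $\spe\left(\widetilde{A}\right)\subset\left[\gamma,\Gamma\right]$, and the normalization $\sum_{j=1}^{n}\|x_j\|^{2}=1$ yields $\|\widetilde{x}\|=1$. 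Hence $\widetilde{A}$ and $\widetilde{x}$ meet all the hypotheses of Theorem \ref{thm2.2}.

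The key structural fact is that the continuous functional calculus respects the block form: for any continuous $\phi$ on $\left[\gamma,\Gamma\right]$ we have $\phi\left(\widetilde{A}\right)=\mathrm{diag}\left(\phi\left(A_1\right),\ldots,\phi\left(A_n\right)\right)$, so that every operator expectation splits as a sum,
\[
\left\langle \phi\left(\widetilde{A}\right)\widetilde{x},\widetilde{x}\right\rangle = \sum_{j=1}^{n}\left\langle \phi\left(A_j\right)x_j,x_j\right\rangle .
\]
Applying this with $\phi=fg$, $\phi=hf$, $\phi=hg$ and $\phi=h^{2}$ reproduces exactly the four operator sums occurring throughout \eqref{eq2.22}, precisely as tabulated in the proof of Theorem \ref{thm2.3}.

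The one point deserving a little care is the scalar argument of the outer functions $h$, $f$ and $g$. Taking $\phi$ to be the identity gives $\left\langle \widetilde{A}\widetilde{x},\widetilde{x}\right\rangle = \sum_{j=1}^{n}\left\langle A_j x_j,x_j\right\rangle$, and since $\spe\left(\widetilde{A}\right)\subset\left[\gamma,\Gamma\right]$ with $\|\widetilde{x}\|=1$ this quantity lies in $\left[\gamma,\Gamma\right]$; therefore $h\left(\left\langle \widetilde{A}\widetilde{x},\widetilde{x}\right\rangle\right)$, $f\left(\left\langle \widetilde{A}\widetilde{x},\widetilde{x}\right\rangle\right)$ and $g\left(\left\langle \widetilde{A}\widetilde{x},\widetilde{x}\right\rangle\right)$ are well defined and equal to $h\left(\sum_{j}\left\langle A_j x_j,x_j\right\rangle\right)$, and likewise for $f$ and $g$. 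This is exactly what places the sum $\sum_{j}\left\langle A_j x_j,x_j\right\rangle$ inside the outer functions appearing in \eqref{eq2.22}.

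Finally I would substitute all of these identities into inequality \eqref{eq2.10} written for $\widetilde{A}$ and $\widetilde{x}$; term by term the single-operator inequality becomes \eqref{eq2.22}, the $h$-synchronous case producing `$\ge$' and the $h$-asynchronous case producing `$\le$'. I do not anticipate a serious obstacle here, since the entire analytic content is already carried by Theorem \ref{thm2.2} and the direct-sum reduction is routine. The only thing requiring attention is the bookkeeping that matches each term of \eqref{eq2.10} with its counterpart sum in \eqref{eq2.22}, together with the observation above that every scalar argument collapses to $\sum_{j=1}^{n}\left\langle A_j x_j,x_j\right\rangle$.
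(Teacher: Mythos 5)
Your proposal is correct and is exactly the paper's argument: the paper proves Theorem \ref{thm2.4} by the same block-diagonal reduction used for Theorem \ref{thm2.3}, applying Theorem \ref{thm2.2} to $\widetilde{A}=\mathrm{diag}(A_1,\ldots,A_n)$ and $\widetilde{x}=(x_1,\ldots,x_n)^{T}$ and translating each inner product into the corresponding sum. Nothing further is needed.
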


\begin{proof}
The proof is similar to the proof of Theorem \ref{thm2.3} on
employing Theorem \ref{thm2.2}.
\end{proof}

\begin{corollary}
 Let $A_j$ be a selfadjoint operator with
$\spe\left(A_j\right)\subset \left[\gamma,\Gamma\right]$  for
$j\in \{1,2,\cdots,n\}$   for some real numbers $\gamma,\Gamma$
with $\gamma<\Gamma$.  Let $h: \left[ {\gamma,\Gamma} \right]\to
\mathbb{R}$ be a non-negative continuous and convex on $\left[
{\gamma,\Gamma} \right]$.
 If $f: \left[ {\gamma,\Gamma} \right]\to \mathbb{R}$ is
continuous and  $h$-synchronous   on $\left[ {\gamma,\Gamma}
\right]$, then
\begin{align}
&  h^2 \left( {\sum\limits_{j = 1}^n{\left\langle {A_jx_j,x_j}
\right\rangle }} \right)\sum\limits_{j = 1}^n {\left\langle
{f^2\left( A_j \right) x_j,x_j} \right\rangle}
 - \left(\sum\limits_{j = 1}^n {\left\langle {h\left( A_j
\right)f\left( A_j \right)x_j,x_j} \right\rangle} \right)^2
\nonumber\\
& \ge (\le) \left[{h\left( {\sum\limits_{j = 1}^n {\left\langle
{A_jx_j,x_j} \right\rangle} } \right) \sum\limits_{j = 1}^n
{\left\langle {h\left( A_j \right)f\left( A_j \right)x_j,x_j}
\right\rangle} }\right.\label{eq2.24}
\\
&\qquad \qquad\left.{-  \sum\limits_{j = 1}^n { \left\langle {h^2
\left( A_j \right)x_j,x_j} \right\rangle} f\left( {\sum\limits_{j
= 1}^n {\left\langle {A_jx_j,x_j} \right\rangle} } \right)
}\right] \cdot f\left( {\sum\limits_{j= 1}^n {\left\langle
{A_jx_j,x_j} \right\rangle }} \right)
 \nonumber\\
&\qquad + \left[{h\left( {\sum\limits_{j = 1}^n {\left\langle
{A_jx_j,x_j} \right\rangle }}\right)f\left( {\sum\limits_{j = 1}^n
{\left\langle {A_jx_j,x_j} \right\rangle} } \right) }\right.
\nonumber\\
&\qquad\qquad\left.{- \sum\limits_{j = 1}^n {\left\langle {h\left(
A_j \right)f\left( A_j \right)x_j,x_j} \right\rangle} }\right]
\cdot \sum\limits_{j = 1}^n {\left\langle {h\left( A_j
\right)f\left( A_j \right)x_j,x_j} \right\rangle }\nonumber
\end{align}

for each $x_j\in H$, $j\in \{1,2,\cdots,n\}$ with $\sum\limits_{j
= 1}^n{\|x_j\|^2}=1$.

\end{corollary}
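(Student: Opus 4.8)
The plan is to obtain this corollary as the diagonal case $g = f$ of Theorem~\ref{thm2.4}, in exactly the way Corollary~\ref{cor5} was deduced from Theorem~\ref{thm2.2}. Since by the Remark following Definition~\ref{def2} every function is $h$-synchronous with itself, the hypothesis that $f$ is $h$-synchronous places us in the ``$\ge$'' branch of \eqref{eq2.22}, so in fact no genuine asynchronous alternative arises and the stated ``$\ge(\le)$'' reduces to ``$\ge$''.

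First I would substitute $g = f$ into the left-hand side of \eqref{eq2.22}. The factor $\sum_{j=1}^n \langle f(A_j) g(A_j) x_j, x_j\rangle$ becomes $\sum_{j=1}^n \langle f^2(A_j) x_j, x_j\rangle$, because $f(A_j) g(A_j) = f^2(A_j)$ under the functional calculus, while the product $\sum_{j=1}^n \langle h(A_j) f(A_j) x_j, x_j\rangle \cdot \sum_{j=1}^n \langle h(A_j) g(A_j) x_j, x_j\rangle$ collapses to the single square $\left(\sum_{j=1}^n \langle h(A_j) f(A_j) x_j, x_j\rangle\right)^2$. This reproduces the left-hand side of \eqref{eq2.24} verbatim.

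Next I would carry the same substitution through the right-hand side of \eqref{eq2.22}. The outer multiplier $g\left(\sum_{j=1}^n \langle A_j x_j, x_j\rangle\right)$ attached to the first bracketed difference turns into $f\left(\sum_{j=1}^n \langle A_j x_j, x_j\rangle\right)$, and the trailing factor $\sum_{j=1}^n \langle h(A_j) g(A_j) x_j, x_j\rangle$ attached to the second bracketed difference turns into $\sum_{j=1}^n \langle h(A_j) f(A_j) x_j, x_j\rangle$; the two bracketed differences are otherwise untouched. The outcome is precisely the right-hand side of \eqref{eq2.24}, which completes the derivation.

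I do not anticipate a real obstacle, since this is a pure specialization rather than a fresh argument. The one point I would flag is that the convexity of $h$, although assumed in the statement, is not actually needed to reach \eqref{eq2.24} as written; it appears to be inherited in view of a possible sharpening in which the Jensen-type operator inequality $h\left(\langle A x, x\rangle\right) \le \langle h(A) x, x\rangle$ for convex $h$ could be used to estimate the term $h\left(\sum_{j=1}^n \langle A_j x_j, x_j\rangle\right)$. I would therefore either invoke that estimate to record a strengthened conclusion or simply remark that convexity is superfluous for the inequality in its present form.
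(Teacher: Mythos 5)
Your proposal is correct and is exactly the paper's own proof: the corollary is obtained by setting $f=g$ in \eqref{eq2.22} of Theorem \ref{thm2.4}, with every term specializing just as you describe. Your side observations—that the ``$\le$'' branch is vacuous since a function is always $h$-synchronous with itself, and that the convexity hypothesis on $h$ is not used in deriving \eqref{eq2.24} as stated—are both accurate and go slightly beyond what the paper records.
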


\begin{proof}
Setting $f=g$ in \eqref{eq2.22}, respectively; we get the required
results.
\end{proof}

\begin{corollary}
 Let $A_j$ be a selfadjoint operator with
$\spe\left(A_j\right)\subset \left[\gamma,\Gamma\right]$  for
$j\in \{1,2,\cdots,n\}$   for some real numbers $\gamma,\Gamma$
with $0<\gamma<\Gamma$.
 If $f: \left[ {\gamma,\Gamma} \right]\to \mathbb{R}$ is
continuous and  $t$-synchronous  on $\left[ {\gamma,\Gamma}
\right]$, then

\begin{align}
&   \left( {\sum\limits_{j = 1}^n{\left\langle {A_jx_j,x_j}
\right\rangle }} \right)^2\sum\limits_{j = 1}^n {\left\langle
{f^2\left( A_j \right) x_j,x_j} \right\rangle}
 - \left(\sum\limits_{j = 1}^n {\left\langle {A_jf\left( A_j \right)x_j,x_j} \right\rangle} \right)^2
\nonumber\\
& \ge (\le) \left[{ \sum\limits_{j = 1}^n {\left\langle
{A_jx_j,x_j} \right\rangle}   \sum\limits_{j = 1}^n {\left\langle
{A_jf\left( A_j \right)x_j,x_j} \right\rangle}
}\right.\label{eq2.26}
\\
&\qquad \qquad\left.{-  \sum\limits_{j = 1}^n { \left\langle
{A^2_jx_j,x_j} \right\rangle} f\left( {\sum\limits_{j = 1}^n
{\left\langle {A_jx_j,x_j} \right\rangle} } \right) }\right] \cdot
f\left( {\sum\limits_{j= 1}^n {\left\langle {A_jx_j,x_j}
\right\rangle }} \right)
 \nonumber\\
&\qquad + \left[{  \sum\limits_{j = 1}^n {\left\langle
{A_jx_j,x_j} \right\rangle } f\left( {\sum\limits_{j = 1}^n
{\left\langle {A_jx_j,x_j} \right\rangle} } \right) }\right.
\nonumber\\
&\qquad\qquad\left.{- \sum\limits_{j = 1}^n {\left\langle
{A_jf\left( A_j \right)x_j,x_j} \right\rangle} }\right] \cdot
\sum\limits_{j = 1}^n {\left\langle {A_jf\left( A_j
\right)x_j,x_j} \right\rangle }\nonumber
\end{align}

for each $x_j\in H$, $j\in \{1,2,\cdots,n\}$ with $\sum\limits_{j
= 1}^n{\|x_j\|^2}=1$.

\end{corollary}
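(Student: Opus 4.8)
The plan is to derive this statement as the special case $h(t)=t$ of the immediately preceding corollary, whose conclusion is \eqref{eq2.24}, exactly as that corollary was itself obtained from Theorem \ref{thm2.4} by setting $f=g$. First I would check that the choice $h(t)=t$ is admissible on $\left[\gamma,\Gamma\right]$ with $0<\gamma<\Gamma$: the function $h(t)=t$ is non-negative, continuous, and convex (being affine), so all the hypotheses of \eqref{eq2.24} are satisfied. Moreover, the assumption that $f$ is $t$-synchronous is, by definition, nothing but $h$-synchronicity for this particular $h$, so the synchronous (asynchronous) hypothesis transfers verbatim and controls the direction of the inequality.

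The second step is purely substitutional. Under the continuous functional calculus one has $h\left(A_j\right)=A_j$ and $h^2\left(A_j\right)=A_j^2$, so that
\begin{align*}
\left\langle {h^2\left(A_j\right)x_j,x_j} \right\rangle=\left\langle {A_j^2 x_j,x_j} \right\rangle, \qquad \left\langle {h\left(A_j\right)f\left(A_j\right)x_j,x_j} \right\rangle=\left\langle {A_j f\left(A_j\right)x_j,x_j} \right\rangle.
\end{align*}
For the scalar arguments, $h\left(\sum_{j=1}^n \left\langle {A_jx_j,x_j} \right\rangle\right)=\sum_{j=1}^n \left\langle {A_jx_j,x_j} \right\rangle$ and $h^2\left(\sum_{j=1}^n \left\langle {A_jx_j,x_j} \right\rangle\right)=\left(\sum_{j=1}^n \left\langle {A_jx_j,x_j} \right\rangle\right)^2$. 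Inserting these identities into each of the two bracketed factors on the right of \eqref{eq2.24}, as well as into its left-hand side, reproduces \eqref{eq2.26} term by term.

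I expect essentially no analytic obstacle here, since the entire argument is a specialization already licensed by Theorem \ref{thm2.4}; the only point demanding care will be the bookkeeping, namely matching each factor of \eqref{eq2.24} with its image in \eqref{eq2.26} after the substitution $h(t)=t$, and in particular confirming that the leading coefficient $h^2$ on the left becomes the square $\left(\sum_{j=1}^n \left\langle {A_jx_j,x_j} \right\rangle\right)^2$ and not merely a linear factor.
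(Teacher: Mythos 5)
Your proposal is correct and is exactly the paper's own argument: the paper proves this corollary by setting $h(t)=t$ in \eqref{eq2.24}, and your verification that $h(t)=t$ is non-negative, continuous, and convex on $\left[\gamma,\Gamma\right]$ with $0<\gamma$, together with the term-by-term substitution, just fills in the routine details the paper leaves implicit.
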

\begin{proof}
Setting $h(t)=t$ in \eqref{eq2.24}, we get the desired results.
\end{proof}

\begin{remark}
 By choosing $h\left(t\right)=1$  for all $t\in
\left[a,b\right]$, in Theorems \ref{thm2.1}, \ref{thm2.2},
\ref{thm2.3} and \ref{thm2.4}, then we recapture all inequalities
obtained \cite{SD1} and their consequences.
\end{remark}

\noindent \textbf{Acknowledgment.} The author  wish to thank the
 referees for their careful reading and for providing very
 constructive
comments that helped improving the presentation of this article.


\bibliographystyle{amsplain}

\end{document}